\newcommand{\Sym}{\mathrm{Sym}}
\newcommand{\diag}{\mathrm{diag}}
\newcommand{\Spec}{\mathrm{Spec}}
\newcommand{\GL}{\mathrm{GL}}
\newcommand{\semi}{_\mathrm{ss}}
\newcommand{\der}{_\mathrm{der}}
\newcommand{\lin}{_{\mathrm{lin}}}
\newcommand{\ab}{_{\mathrm{ab}}}
\newcommand{\tr}{\mathrm{tr}}
\newcommand{\red}{_{\mathrm{red}}}
\newcommand{\Het}{\mathrm{H}_{\mathrm{\acute{e}t}}}
\newcommand{\Hom}{\mathrm{Hom}}
\newcommand{\Z}{\mathbb{Z}}
\newcommand{\F}{\mathbb{F}}
\newcommand{\Q}{\mathbb{Q}}
\newcommand{\G}{\mathbb{G}}
\newcommand{\Ext}{\mathrm{Ext}}
\newcommand{\Id}{\mathrm{Id}}
\newcommand{\Glin}{G_{\mathrm{lin}}}
\newcommand{\Gab}{G_{\mathrm{ab}}}
\newcommand{\Gant}{G_{\mathrm{ant}}}
\newcommand{\Grp}{\mathrm{Grp}}
\newcommand{\Ho}{\mathrm{H}}
\newcommand{\chr}{\mathrm{char}}
\newcommand{\inv}{\mathrm{inv}}
\newcommand{\Ru}{\mathrm{R}_{\text{u}}}
\newcommand{\R}{\mathrm{R}}
\newcommand{\Fix}{\mathrm{Fix}}
\newtheorem{theorem}{Theorem}
\newtheorem*{theorem*}{Theorem}
\newtheorem{strategy}[theorem]{Strategy}
\newtheorem{corollary}[theorem]{Corollary}
\newtheorem{lemma}[theorem]{Lemma}
\newtheorem{prop}[theorem]{Proposition}
\theoremstyle{definition}
\newtheorem{definition}[theorem]{Definition}
\newtheorem*{definition*}{Definition}
\newtheorem{example}[theorem]{Example}
\newtheorem*{question*}{Question}
\theoremstyle{remark}
\newtheorem{remark}[theorem]{Remark}
\newcommand{\J}{\mathrm{J}}
\title{Functorial splitting of $l$-adic cohomology\\
of an extension of group varieties}
\author{Victor de Vries}
\begin{document}

\maketitle

\begin{abstract}
In this document we consider an exact sequence of group varieties $e\to N\to G\to Q\to$~$e$ over an algebraically closed field. We show that for $l\neq \chr(k)$ a prime there exists an isomorphism of graded $\Q_l$-algebras $\Het^*(G,\Q_l)\cong \Het^*(N,\Q_l)\otimes_{\Q_l}\Het^*(Q,\Q_l)$ that is compatible with pullback homomorphisms $\varphi^*$ of endomorphisms $\varphi:G\to G$ that stabilize $N$.
\end{abstract}

\section{Introduction}
Let $G$ be an algebraic group over an algebraically closed field $k$. We will call it a group variety if it is smooth and connected. For $l\neq \chr(k)$ a prime the $l$-adic cohomology ring of a variety~$X$ is denoted $\Ho^*(X):=\Het^*(X,\Q_l)=\bigoplus_{r\geq 0}\Het^r(X,\Q_l)$ and we denote the tensor product over $\Q_l$ by~$\otimes$. For an exact sequence of group varieties, we study the property that we call $(*)$ related to the $l$-adic cohomology of the involved group varieties:

\begin{definition*}
An exact sequence of group varieties $e\to N\overset{\iota}{\to} G\overset{\pi}{\to} Q\to e$ has $(*)$ if there is an isomorphism of graded $\Q_l$-algebras $s\otimes \pi^*:\Ho^*(N)\otimes\Ho^*(Q)\to \Ho^*(G)$ where $s$ is a section to $\iota^*$.
\end{definition*}

Our main theorem asserts that any exact sequence of group varieties has this property.

\begin{theorem*}\label{result}
An exact sequence of group varieties $e\to N\to G\to Q\to e$ has $(*)$.
\end{theorem*}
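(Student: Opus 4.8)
The plan is to realize property $(*)$ as an instance of the Leray--Hirsch theorem applied to the fibration $\pi\colon G\to Q$. First I would record that $\pi$ exhibits $G$ as an $N$-torsor over $Q$ which is locally trivial in the étale topology (a smooth surjection with smooth connected fibres), and that the monodromy action of $Q$ on $\Ho^*(N)$ is trivial, since it is induced by translations and inner automorphisms of the connected group $N$, all of which are homotopic to the identity and hence act trivially on cohomology. This yields a multiplicative Leray spectral sequence $E_2^{p,q}=\Ho^p(Q)\otimes\Ho^q(N)\Rightarrow\Ho^{p+q}(G)$ whose edge homomorphisms are exactly $\pi^*$ and $\iota^*$. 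Granting for the moment that $\iota^*\colon\Ho^*(G)\to\Ho^*(N)$ is surjective, the image under a section of a basis of $\Ho^*(N)$ restricts fibrewise to that basis, Leray--Hirsch makes $\Ho^*(G)$ a free $\Ho^*(Q)$-module on these classes, the spectral sequence degenerates at $E_2$, and the map $a\otimes b\mapsto s(a)\smile\pi^*(b)$ is an isomorphism of graded $\Ho^*(Q)$-modules; because $\Ho^*(G)$ is graded-commutative it is automatically an algebra homomorphism once $s$ is chosen multiplicative, which is precisely $(*)$.

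To organise the construction of a multiplicative section I would invoke the structure theory of the Hopf algebra $\Ho^*(N)$. Since $N$ is a connected group variety, $\Ho^*(N)$ is a connected, graded-commutative, finite-dimensional Hopf algebra over the characteristic-zero field $\Q_l$, hence by the Hopf--Borel theorem an exterior algebra $\bigwedge P_N$ on its space of primitives $P_N$, all in odd degree. An algebra map out of $\bigwedge P_N$ is the same as a linear map on $P_N$, so a multiplicative section $s$ of $\iota^*$ exists exactly when every primitive generator lifts along $\iota^*$; thus the entire problem reduces to surjectivity of $\iota^*$ on $P_N$. Running the same argument for $Q$ and $G$ identifies $\pi^*$ and $\iota^*$ with maps of primitives $P_Q\to P_G\to P_N$, and $(*)$ becomes the assertion that $0\to P_Q\xrightarrow{\pi^*}P_G\xrightarrow{\iota^*}P_N\to 0$ is a short exact sequence of graded vector spaces.

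The real difficulty, which I expect to be the main obstacle, is showing that the transgressions of the primitive generators of $\Ho^*(N)$ vanish, equivalently that $\iota^*$ is onto $P_N$. This is the step that genuinely uses normality of $N$, so that $Q$ is a group and $\pi$ a homomorphism; without it the claim fails, the non-normal torus bundle $\SL_2\to\SL_2/\G_m$ (the Hopf fibration $S^3\to S^2$ over $\C$) being the classical counterexample, where the degree-one generator fails to lift. The transgression of a primitive $y\in P_N$ is a characteristic class in $\Ho^*(Q)$ of the torsor $G\to Q$, and the point is that, because this torsor underlies an \emph{extension of group varieties}, the class is the image of the extension class under a characteristic-class homomorphism that annihilates it. The guiding computation is the semi-abelian case $e\to\G_m\to G\to A\to e$: the single transgression is the first Chern class in $\Ho^2(A)$ of the line bundle classifying the extension, and that bundle lies in $\Pic^0(A)$, on which $c_1$ vanishes. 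I would attempt to reduce the general statement to such model computations by dévissage along the Chevalley decomposition of $N$, $G$ and $Q$ into unipotent, torus, abelian-variety and semisimple pieces, the unipotent pieces contributing only trivial cohomology, checking the vanishing of each transgression on the relevant building block and bootstrapping.

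For the functoriality I would check that every construction above is natural: an endomorphism $\varphi\colon G\to G$ with $\varphi(N)\subseteq N$ restricts to $N$, descends to $Q$, induces a morphism of the two Leray spectral sequences, and preserves primitives, so the short exact sequence $0\to P_Q\to P_G\to P_N\to 0$ is $\varphi^*$-equivariant. The delicate point is that $s$ depends on a choice of lifts of $P_N$, and there is no canonical complement to $\pi^*P_Q$ inside $P_G$; I would therefore either exploit the Frobenius/weight grading (for a model over a finite field) to pin down a canonical $\varphi^*$-stable splitting, or verify the intertwining $\varphi^*\circ(s\otimes\pi^*)=(s\otimes\pi^*)\circ(\varphi|_N^*\otimes\varphi_Q^*)$ directly on primitives using the equivariant exact sequence. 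Everything here is formal once the surjectivity of $\iota^*$ from the third step is established, so that transgression-vanishing argument remains the crux of the proof.
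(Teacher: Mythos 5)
Your reduction of $(*)$ to the exactness of $0\to \mathrm{P}\Ho^*(Q)\to \mathrm{P}\Ho^*(G)\to \mathrm{P}\Ho^*(N)\to 0$ via the Hopf--Borel structure theorem is exactly the paper's Lemma \ref{primitive}, and your observation that everything hinges on surjectivity of $\iota^*$ onto the primitives of $\Ho^*(N)$ correctly locates the crux. But that crux is where the proposal stops being a proof: the ``transgression is a characteristic class of the extension class, which is annihilated'' argument is only verified in the single model case $e\to\G_m\to G\to A\to e$, and it is not clear how it extends even to the next case one must face, namely $e\to\Glin\to G\to\Gab\to e$ with $\Glin$ nonabelian, let alone to extensions of abelian varieties by abelian varieties (which need not split, only split up to isogeny) or of semisimple groups. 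The paper's Theorem \ref{chevalley} handles the Chevalley sequence by a genuinely different device --- the anti-affine subgroup $D=\Gant$, the isogeny $D\times_k\Glin\to G$, and a Leray--Hirsch argument for the auxiliary Zariski-locally trivial bundle $D\times_k\Glin\to Q$ with \emph{special} fibre $D\lin$ --- and nothing in your sketch substitutes for this. Note also that your direct Leray--Hirsch setup for $\pi\colon G\to Q$ itself presupposes the surjectivity you are trying to prove, so it cannot be used to bootstrap it.

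The second, more structural gap is in the dévissage itself. When you pass from $e\to N\to G\to Q\to e$ to the induced sequences of linear parts, abelianizations, radicals, or semisimple quotients, those sequences are in general \emph{not exact}: for instance $N\lin\to \ker(G\lin\to Q\lin)=N\cap\Glin$ is only an inclusion of finite index. The paper has to introduce the notion of an almost exact sequence (Definition \ref{almostE}), prove isogeny-invariance of $l$-adic cohomology (Lemma \ref{isogeny}), and then run a two-fold snake-lemma argument on a $3\times 3$ diagram of primitive subspaces to transfer exactness from the outer rows to the middle one. Your plan of ``checking the vanishing of each transgression on the relevant building block and bootstrapping'' silently assumes the building-block sequences are exact and that exactness propagates; making that precise is most of the remaining work. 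The functoriality discussion at the end is fine but, as you say yourself, formal once surjectivity of $\iota^*$ is in hand --- and that is precisely what has not been established.
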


The way that we prove the theorem is as follows: By first showing that the property $(*)$ holds for certain specific exact sequence. For example a group variety $G$ fits into an exact sequence $e\to \Glin\to G\to \Gab\to e$, where $\Glin$ is linear and $\Gab$ is an abelian variety and we show that this sequence has $(*)$.

Then we introduce the following generalization of an exact sequence.

\begin{definition*}
A sequence of group varieties $e\to N\overset{\iota}{\to} G\overset{\pi}{\to} Q\to e$ is called \textit{almost exact} if either:
\begin{itemize}
    \item There exists an exact sequence $e\to M\overset{j}{\to} G\overset{\pi}{\to} Q\to e$ is exact and an isogeny $q:N\to M$ such that $\iota=j\circ q$.
    \item There exists an exact sequence $e\to N\overset{\iota}{\to} G\overset{p}{\to} P\to e$ and an isogeny $q:P\to Q$ is such that $\pi=q\circ p$.
\end{itemize}
We also say that an almost exact sequence has $(*)$ if it satisfies the condition in Definition \ref{star}.
\end{definition*}

The use of this definition lies in the fact that whenever $e\to N\to G\to Q\to e$ is exact, the induced sequences $e\to N\lin\to G\lin\to Q\lin\to e$ and $e\to N\ab\to G\ab\to Q\ab\to e$ will generally not be exact but will be almost exact.\\
After showing that exact sequences $e\to N\to G\to Q\to e$, where either $N,G,Q$ are all linear group varieties or are all abelian varieties, have $(*)$, we show that the almost exact sequences $e\to N\lin\to G\lin\to Q\lin\to e$ and $e\to N\ab\to G\ab\to Q\ab\to e$ both have $(*)$. This will allow us to conclude that the exact sequence $e\to N\to G\to Q\to e$ has $(*)$.

One use of this theorem lies in the following: Let $\sigma:G\to G$ be an endomorphism that stabilizes the subgroup variety $N$ of $G$. Writing $\sigma_N:N\to N$ and $\sigma_Q:Q\to Q$ for the induced endomorphisms, one sees that through the isomorphism in Theorem \ref{result}, $\sigma^*$ corresponds to $\sigma_N^*\otimes \sigma_Q^*$. This implies that we can calculate the graded trace for $\sigma$, $\tr(\sigma^*):=\sum_{r\geq 0}(-1)^r\tr(\sigma^*\,|\,\Ho^r(G))$ by $\tr(\sigma)=\tr(\sigma_N)\cdot \tr(\sigma_Q)$. An application of our result is the following.

The authors of \cite{BCH} studied the fixed point count of the $n$'th iterate of $\sigma$ and came up with the following formula (\cite{BCH} Theorem 7.2.1), which holds in the case that $k=\overline{\F}_p$:
\begin{equation*}
    \#\Fix(\sigma^n)=|d_n|c^nr_n|n|_p^{s_n}p^{-t_n|n|_p^{-1}}
\end{equation*}
Here we have $c\in \Q_{>0}$ and we have that $s_n,t_n\in \Z_{>0}$ and $r_n\in \Q_{>0}$ are periodic. The term $d_n$ may be defined even when $k\neq \overline{\F}_p$ for any $p$ (see \cite{BCH} Definition 12.3.2) and can be given a cohomological interpretation as follows: The group variety $G$ fits into an exact sequence $e\to \Glin\to G\to \Gab\to e$ where $\Glin$ is a linear and fully characteristic subgroup variety and $\Gab$ is an abelian variety. The term $d_n$ equals $\tr(\sigma\lin^n)\cdot \tr(\sigma\ab^n)$, where $\sigma\lin$ is the induced endomorphism on $\Glin$ and $\sigma\ab$ the one on $\Gab$.\\

One may ask the natural question, whether $d_n$ is equal to $\tr(\sigma^n)$? The discussion below Theorem~\ref{result} shows that this is indeed the case. In the case that $k=\overline{\F}_p$ this isomorphism was shown to exist by the authors of \cite{BCH} by using the result of Arima (\cite{Arima} Theorem 1), which says that there exists an isogeny $\Glin\times_{\overline{\F}_p}\Gab\to G$. However over a general algebraically closed field $k$, such an isogeny need not exist, showing the need to use a different approach to tackle this problem in general.\\

\section{Generalities and strategy}

First we define the property of group varieties in which we are interested.

\begin{definition}\label{star}
An exact sequence of group varieties $e\to N\overset{\iota}{\to} G\overset{\pi}{\to} Q\to e$ has $(*)$ if there is an isomorphism of graded $\Q_l$-algebras $s\otimes \pi^*:\Ho^*(N)\otimes\Ho^*(Q)\to \Ho^*(G)$ where $s$ is a section to $\iota^*$.
\end{definition}

The aim of this document is to show that any group variety has this property and hence to prove the following theorem.

\begin{theorem}
An exact sequence of group varieties $e\to N\to G\to Q\to e$ has $(*)$.
\end{theorem}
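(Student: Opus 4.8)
The plan is to isolate a single mechanism that produces $(*)$ from one hypothesis, and then to verify that hypothesis case by case. The mechanism is the Leray--Hirsch principle. The projection $\pi\colon G\to Q$ is a torsor under the smooth connected group $N$, hence étale-locally trivial; since $N$ is connected, its translation action on $\Ho^*(N)$ is trivial, so the local system $R^q\pi_*\Q_l$ is the constant sheaf with value $\Ho^q(N)$ and we obtain a Leray spectral sequence $E_2^{p,q}=\Ho^p(Q)\otimes\Ho^q(N)\Rightarrow\Ho^{p+q}(G)$. The first thing I would record is that if $\iota^*\colon\Ho^*(G)\to\Ho^*(N)$ is surjective, then this spectral sequence degenerates at $E_2$ and $\Ho^*(G)$ is free over $\pi^*\Ho^*(Q)$ on any lift of a basis of $\Ho^*(N)$. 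Thus $(*)$ for an exact sequence is reduced to surjectivity of $\iota^*$, provided one can also arrange a \emph{multiplicative} section.

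The multiplicativity is where the Hopf structure enters, and I expect it to come for free. Each $\Ho^*(N)$ is a finite-dimensional, connected, graded-commutative Hopf algebra over the characteristic-zero field $\Q_l$, so by the structure theorem for such algebras it is the exterior algebra $\Lambda P$ on its odd-degree primitives $P$. Granting surjectivity of $\iota^*$, I would choose any additive lift of $P$ into $\Ho^*(G)$; because these classes sit in odd degrees they square to zero and pairwise anticommute, so the lift extends uniquely to an algebra map $s\colon\Lambda P=\Ho^*(N)\to\Ho^*(G)$, and this $s$ is a section to $\iota^*$. The Leray--Hirsch isomorphism is then exactly $s\otimes\pi^*$ and is automatically a map of graded algebras. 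Consequently the entire theorem collapses to proving that $\iota^*$ is surjective for every exact sequence of group varieties.

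To establish surjectivity I would follow the reduction to linear and abelian groups sketched in the introduction. First the two base regimes. When $N,G,Q$ are all abelian, Poincaré complete reducibility yields an isogeny $N\times Q'\to G$ with $Q'\to Q$ an isogeny; since isogenies induce isomorphisms on $\Q_l$-cohomology, $\Ho^*(G)\cong\Ho^*(N)\otimes\Ho^*(Q')$ by Künneth, under which $\iota^*$ becomes the projection onto $\Ho^*(N)$ and is surjective. When $N,G,Q$ are all linear, the unipotent radicals contribute nothing ($\Ho^*(\mathbb{A}^n)=\Q_l$), reducing the question to reductive groups; there $\Ho^*$ is again exterior on odd primitives governed by a maximal torus, and I would verify surjectivity of $\iota^*$ by passing to compatible maximal tori $T_N\subseteq T_G$, where it becomes the statement that a subtorus is an isogeny-direct-factor. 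Applying the Chevalley decomposition $e\to X\lin\to X\to X\ab\to e$ to each of $X=N,G,Q$ produces the almost-exact sequences $N\lin\to\Glin\to Q\lin$ and $N\ab\to\Gab\to Q\ab$, and isogeny-invariance upgrades the two base results to these. I would then assemble everything in a $3\times 3$ pattern: $(*)$ for the three Chevalley sequences and for the two almost-exact sequences combine to give $\Ho^*(G)\cong\Ho^*(\Glin)\otimes\Ho^*(\Gab)\cong(\Ho^*(N\lin)\otimes\Ho^*(Q\lin))\otimes(\Ho^*(N\ab)\otimes\Ho^*(Q\ab))\cong\Ho^*(N)\otimes\Ho^*(Q)$, after which one checks the composite has the required form $s\otimes\pi^*$.

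The hard part will be twofold. The reductive base case demands an honest computation of $\iota^*$ on cohomology, and getting surjectivity cleanly will require the torus reduction above rather than any homotopy-theoretic shortcut. More delicate is the assembly step: as the introduction stresses, there is in general no isogeny $\Glin\times\Gab\to G$, so the gluing of the linear and abelian halves cannot be carried out geometrically and must instead be performed at the level of the cohomology rings through the almost-exact sequences. The real bookkeeping obstacle is to confirm that rearranging the four tensor factors respects the algebra structure \emph{and} that the resulting isomorphism is genuinely of the form $s\otimes\pi^*$ with $s$ a section to $\iota^*$; keeping the chosen sections mutually compatible across the $3\times 3$ diagram is, I expect, the crux of the argument.
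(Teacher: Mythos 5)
Your overall architecture (two base regimes, Chevalley decomposition, almost-exact sequences, a $3\times 3$ assembly) tracks the paper, but three steps contain genuine gaps. First, your central mechanism --- identifying the $E_2$-page of the Leray spectral sequence of $\pi\colon G\to Q$ with $\Ho^p(Q)\otimes\Ho^q(N)$ and then degenerating it --- is precisely what the paper singles out in its closing Question as something it does \emph{not} know how to carry out directly. A torsor under $N$ is in general only étale-locally trivial, so identifying $R^q\pi_*\Q_l$ as the constant sheaf with value $\Ho^q(N)$ requires a base-change/Künneth argument for the non-proper fibre $N$ together with a monodromy argument; the paper deliberately confines its use of Leray--Hirsch to the Zariski-locally trivial fibration $D\times_k\Glin\to Q$ in Proposition \ref{dimension}, where the Mayer--Vietoris argument of the topological reference applies. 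You would need to supply these base-change arguments rather than assert them. Second, your plan for the reductive base case --- ``passing to compatible maximal tori, where it becomes the statement that a subtorus is an isogeny-direct-factor'' --- does not work: restriction to a maximal torus neither computes nor surjects onto the cohomology of a reductive group ($\dim\Ho^1(\GL_n)=1$ while $\dim\Ho^1(\G_m^n)=n$, exactly the failure recorded in the paper's final Remark). The paper instead uses the isogeny $\R(G)\times_k G\der\to G$ (Corollary \ref{reduct}), the almost-simple factor decomposition of semisimple groups, and a separate character-lattice argument for tori (Lemma \ref{types}).

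Third, and most seriously, the step you yourself call ``the crux'' --- verifying that the composite of the abstract isomorphisms in your assembly is genuinely of the form $s\otimes\pi^*$ with $s$ a section of $\iota^*$ --- is left unexecuted, and that is where the actual content of the theorem lies; a chain of unrelated isomorphisms $\Ho^*(G)\cong\Ho^*(\Glin)\otimes\Ho^*(\Gab)\cong\cdots$ only yields an isomorphism of graded vector spaces with no functoriality. The paper's resolution is Lemma \ref{primitive}: $(*)$ is \emph{equivalent} to exactness of $0\to\mathrm{P}\Ho^*(Q)\to\mathrm{P}\Ho^*(G)\to\mathrm{P}\Ho^*(N)\to 0$ on primitive subspaces, which converts the multiplicative compatibility problem into linear algebra, after which the assembly is a pair of snake-lemma chases in $3\times 3$ diagrams of primitives (Lemma \ref{reductive}, Lemma \ref{linear}, and the final proof). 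You use the Hopf structure theorem to upgrade an additive section to a multiplicative one --- essentially the ``if'' direction of Lemma \ref{primitive} --- but without the converse direction and the diagram chase on primitives your reduction to ``surjectivity of $\iota^*$'' is never closed in the general case.
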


We now state two known lemmas that are key for proving Theorem \ref{result}. The first one is a structure theorem on algebraic groups (see \cite{milne_AG}).

\begin{lemma}
The following statements about group varieties are true:
\begin{itemize}
\item A group variety $G$ is an extension of an abelian variety $\Gab$ by a linear group variety $\Glin$.
\item A linear group variety is the extension of a reductive group variety $G\red$ by a unipotent group variety $\Ru(G)$.
\item A reductive group variety $G$ is the extension of a semisimple group variety $G\semi$ by a torus $\R(G)$
\item A group variety $G$ has a largest normal subgroup variety $\Gant$, called the \textit{anti-affine group variety} of $G$, such that $G/\Gant$ is affine.
\end{itemize}
\end{lemma}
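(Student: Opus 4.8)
Since all four assertions are classical, the plan is to reduce each one to a known structure theorem for algebraic groups over a (perfect, hence here algebraically closed) field and to exhibit the key construction rather than redevelop the theory from scratch. For the first bullet I would invoke Chevalley's structure theorem: every smooth connected group variety $G$ contains a unique smallest normal linear subgroup variety $\Glin$ such that the quotient $G/\Glin$ is an abelian variety. One way to produce $\Glin$ is as the intersection of the kernels of all homomorphisms from $G$ to abelian varieties, or equivalently via the Albanese map of $G$; the substance of the theorem is that this intersection is affine, that the quotient exists as a group variety, and that it is proper. This is the deep input, and I would simply cite it.

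For the second bullet I would take $\Ru(G)$ to be the unipotent radical of the linear group variety $G$, namely the largest smooth connected normal unipotent subgroup variety; it exists because the subgroup variety generated by two subgroups of this kind is again of the same kind, so a unique maximal one can be formed. The quotient $G\red := G/\Ru(G)$ is then reductive essentially by construction, since passing to this quotient annihilates every normal unipotent subgroup variety. The third bullet is handled analogously: for reductive $G$ I would let $\R(G)$ be the radical, the maximal connected normal solvable subgroup variety, which in the reductive case is a central torus, and then $G\semi := G/\R(G)$ is semisimple because its radical has become trivial.

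For the fourth bullet I would use the notion of an anti-affine group variety, one with $\Oh(G)=k$, together with the fact (due to Rosenlicht and Brion) that the subgroup variety generated by all anti-affine subgroup varieties of $G$ is itself anti-affine and normal; this is the desired $\Gant$, and $G/\Gant$ is affine by maximality. The main obstacle is unmistakably the first bullet, Chevalley's theorem, whose proof genuinely requires the theory of quotients of group varieties and the geometry of abelian varieties. Once the existence of the relevant quotient group varieties is granted, the remaining three assertions are formal consequences of the definitions of the unipotent radical, the radical, and the anti-affine group, so I would devote the write-up to pinning down precise references in \cite{milne_AG} rather than to new arguments.
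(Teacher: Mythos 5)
The paper gives no proof of this lemma at all---it simply cites \cite{milne_AG} as a collection of standard structure theorems---and your proposal, which reduces each bullet to the classical results (Chevalley's theorem, the unipotent radical, the radical of a reductive group, and the anti-affine subgroup) and defers to the references, is exactly the intended treatment and is correct. The only quibble is with the fourth bullet as stated: $\Gant$ is the \emph{largest anti-affine} subgroup variety, equivalently the \emph{smallest} normal subgroup variety with affine quotient (the ``largest normal subgroup with affine quotient'' would be $G$ itself); your construction via the subgroup generated by all anti-affine subgroups is the right one.
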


The second lemma says that an isogeny induces an isomorphism on cohomology, which is a well known fact.

\begin{lemma}\label{isogeny}
Let $\varphi:G\to H$ be an isogeny of group varieties. Then $\varphi^*:\Ho^*(H)\to \Ho^*(G)$ is an isomorphism.
\begin{proof}
It suffices to prove the statement for separable and purely inseparable isogenies. In the purely inseparable case, $\ker(\varphi)$ is connected, so $\Z/l^n\Z\to \varphi_*\varphi^*\Z/l^n\Z$ is an isomorphism of sheaves. Since $\ker(\varphi)$ is zero dimensional, the Leray spectral sequence map $\Ho^n(H,\varphi_*\varphi^*\Z/l^n\Z)\to \Ho^n(G,\varphi^*\Z/l^n\Z)$ is an isomorphism, which gives the result.\\
In the separable case, $\ker(\varphi)$ may be thought of as a finite group $M$ and $H$ is identified with $G/M$. For any abelian sheaf $\mathcal{F}$ the map $\Ho^*(H,\mathcal{F})\to \Ho^*(G,\varphi^*\mathcal{F})\cong \Ho^*(H,\varphi_*\varphi^*\mathcal{F})$ is induced by $\mathcal{F}\to \varphi_*\varphi^*\mathcal{F}$, which identifies $\mathcal{F}=(\varphi_*\varphi^*\mathcal{F})^M$, see (\cite{srinivasan2006representations} (5.10)). Then there exists a map $\varphi_*\varphi^*\mathcal{F}\to \mathcal{F}$ given by $x\mapsto \sum_{m\in M}m\cdot~x$, called the transfer map. Applying this to the sheaves $\Z/l^n\Z$ and passing to $l$-adic cohomology gives a map $N:\Ho^*(G)\to \Ho^*(H)$ such that $N\circ \varphi^*$ is multiplication by $\#M$ and $\varphi^*\circ N$ maps $x\mapsto \sum_{m\in M}m\cdot x$. This first point gives that $\varphi^*$ is injective and the second point gives that under $\varphi^*$, $\Ho^*(H)$ is identified with $\Ho^*(G)^M$. \\
Since the action of $M$ on $\Ho^*(G)$ is induced by the one of $G(k)$ and $G$ is a connected algebraic group, the action is trivial on compactly supported cohomology (\cite{Deligne-Lustzig} Proposition 6.4). When combined with the functoriality of the Poincaré duality pairing, this gives that $M$ acts trivially on $\Ho^*(G)$, which gives the result.
\end{proof}
\end{lemma}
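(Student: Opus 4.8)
The plan is to reduce, via the canonical factorization of an isogeny over the (perfect) algebraically closed field $k$, to two cases that can be handled by quite different mechanisms. Writing $M=\ker(\varphi)$, the connected--\'etale sequence $e\to M^0\to M\to M^{\mathrm{et}}\to e$ splits over $k$, so $\varphi$ factors as $G\to G/M^0\to G/M=H$, where the first map is purely inseparable and the second is separable. Since the formation of $\varphi^*$ is functorial, it suffices to prove the lemma when $\varphi$ is purely inseparable and when it is separable.

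For a purely inseparable isogeny the kernel is an infinitesimal group scheme, so $\varphi$ is a finite, surjective, radicial morphism, i.e.\ a universal homeomorphism. By topological invariance of the \'etale site, $\varphi$ induces an equivalence of the \'etale topoi of $G$ and $H$, hence $\varphi^*$ is an isomorphism on $l$-adic cohomology; concretely, $\varphi_*\varphi^*\mathcal{F}\cong\mathcal{F}$ for the relevant constant sheaves, and the Leray spectral sequence then collapses to give the isomorphism. For a separable isogeny, $M$ is a genuine finite group and $\varphi$ realizes $G$ as a finite \'etale Galois cover of $H=G/M$ with group $M$. Because we use $\Q_l$-coefficients, the order $\#M$ is invertible, so I would run the standard transfer argument: there is a norm map $N\colon\Ho^*(G)\to\Ho^*(H)$ with $N\circ\varphi^*=\#M\cdot\mathrm{id}$, while $\varphi^*\circ N$ equals the operator $x\mapsto\sum_{m\in M}m\cdot x$. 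The first identity shows $\varphi^*$ is injective and the second identifies $\Ho^*(H)$ with the invariants $\Ho^*(G)^M$.

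It then remains to prove that the $M$-action on $\Ho^*(G)$ is trivial, and this is the step I expect to be the main obstacle. The action is the translation action coming from $M\hookrightarrow G(k)$, and the statement to establish is that a connected group variety acts trivially on its own cohomology by translation. The most robust route is to prove triviality first on compactly supported cohomology, where it follows from connectedness of $G$ since the action factors through the (trivial) group of components $\pi_0(G)$, and then to transport the conclusion to ordinary cohomology using the functoriality of the Poincar\'e duality pairing. Care is needed to check that it really is the translation action (rather than, say, conjugation) that is at issue and that it is killed by connectedness. Once triviality is in hand, $\Ho^*(G)^M=\Ho^*(G)$, so $\varphi^*$ is an isomorphism in the separable case as well, completing the proof.
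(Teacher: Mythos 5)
Your proposal is correct and follows essentially the same route as the paper: reduce via the connected--\'etale factorization to the purely inseparable and separable cases, handle the former by the triviality of $\varphi_*\varphi^*$ on constant sheaves (your appeal to topological invariance of the \'etale site for the universal homeomorphism is a cleaner packaging of the paper's Leray argument), and handle the latter by the transfer map giving $\Ho^*(H)\cong\Ho^*(G)^M$, with triviality of the translation action established first on compactly supported cohomology via connectedness (the content of the Deligne--Lusztig result the paper cites) and then transported to $\Ho^*(G)$ by functoriality of Poincar\'e duality. The step you flagged as the main obstacle is resolved exactly as you propose, so no gap remains.
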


In order to share the strategy for the proof, we need to make one more definition.

\begin{definition}\label{almostE}
A sequence of group varieties $e\to N\overset{\iota}{\to} G\overset{\pi}{\to} Q\to e$ is called \textit{almost exact} if either:
\begin{itemize}
    \item There exists an exact sequence $e\to M\overset{j}{\to} G\overset{\pi}{\to} Q\to e$ is exact and an isogeny $q:N\to M$ such that $\iota=j\circ q$.
    \item There exists an exact sequence $e\to N\overset{\iota}{\to} G\overset{p}{\to} P\to e$ and an isogeny $q:P\to Q$ is such that $\pi=q\circ p$.
\end{itemize}
We also say that an almost exact sequence has $(*)$ if it satisfies the condition in Definition \ref{star}.
\end{definition}

Now we give the strategy for proving the main thoerem.

\begin{strategy}\label{strategy}
The strategy for proving Theorem \ref{result} will be as follows:
\begin{enumerate}
    \item We show that the exact sequence $e\to \Glin\to G\to \Gab\to e$ has $(*)$. For this we make use of the existence of the anti-affine group variety.  We also show that for $G$ linear the exact sequence $e\to \Ru(G)\to G\to G\red\to e$ and for $G$ reductive the exact sequence $e\to \R(G)\to G\to G\semi\to e$ both have $(*)$.
    \item We show that an exact sequence of abelian varieties (resp. tori, resp. semisimple group varieties) has $(*)$.
    \item We show that certain almost exact sequences such as $e\to N\lin\to G\lin \to Q\lin \to e$ have $(*)$ and in the end we conclude that any exact sequence $e\to N\to G\to Q\to e$ has $(*)$.
\end{enumerate}
\end{strategy}

All of the steps will use Lemma \ref{isogeny}.\\
We know quite a bit about $\Ho^*(G)$ for $G$ a general group variety. For $\mu:G\times_k G\to G$ the multiplication, $\inv:G\to G$ inversion and $e:\Spec(k)\to G$ the identity, the data $(\Ho^*(G),\mu^*,\inv^*,e^*)$ defines a $\Q_l$-Hopf-algebra, where one uses the Künneth isomorphism $p_1^*\cup p_2^*:\Ho^*(G)\otimes \Ho^*(G)\to \Ho^*(G\times_k G)$   (see \cite{hopfalgebra} for an introduction to Hopf-algebras). An important subspace of a Hopf-algebra is the primitive subspace.

\begin{definition}
Let $(\Ho, \Delta, \iota, \epsilon)$ be a Hopf algebra over a field $l$. Then the \textit{primitive subspace} of $\Ho$ is defined by $\mathrm{P}\Ho:=\{x\in \Ho\,|\,\Delta(x)=x\otimes 1+1\otimes x\}$.
\end{definition}

Note that a homomorphism of Hopf-algebras preserves the primitive subspace. Since $\Ho^*(G)$ is finite dimensional over $\Q_l$ and graded-commutative we can use the following theorem of Hopf.

\begin{theorem} [{Part 2.5D \cite{Cartier2007}}] \label{hopfstructuur}
Let $\Ho^*$ be a graded commutative Hopf-algebra over a field $l$ with $\chr(l)=0$ such that $\Ho^n$ is finite dimensional for all $n\geq 0$. Then there is a natural isomorphism of graded $k$-algebras $\Ho^*=\bigwedge^* \mathrm{P}\Ho^*:=\bigoplus_{r\geq 0} \bigwedge^r \mathrm{P}\Ho^*$, where for $\{z_{i_j}\}_{1\leq j\leq r}$ homogeneous elements of $\mathrm{P}\Ho^*$ of degree $i_j$, the element $z_{i_1}\wedge...\wedge z_{i_r}$ is homogeneous of degree $\sum_{j=1}^r i_j$.
In particular we have $\Ho^*(G)=\bigwedge^* \mathrm{P}\Ho^*(G)$.
\end{theorem}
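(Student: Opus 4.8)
The plan is to realise the claimed isomorphism as the canonical algebra map out of the free graded-commutative algebra on the primitives, and then to check that it is bijective by treating the coalgebra structure and the algebra structure separately. Throughout I write $\Ho^+=\bigoplus_{n\geq 1}\Ho^n$ for the augmentation ideal, so that $\Ho^*=l\oplus\Ho^+$ by connectedness $\Ho^0=l$ (which holds for $\Ho^*(G)$ since $G$ is connected), and $\mathrm{Q}\Ho^*:=\Ho^+/(\Ho^+)^2$ for the indecomposables. Since $\bigwedge^*\mathrm{P}\Ho^*$ is free as a graded-commutative algebra, the inclusion $\mathrm{P}\Ho^*\hookrightarrow\Ho^*$ extends uniquely to a degree-preserving algebra homomorphism $\phi:\bigwedge^*\mathrm{P}\Ho^*\to\Ho^*$. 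First I would verify that $\phi$ is a morphism of graded Hopf algebras: both $\Delta\circ\phi$ and $(\phi\otimes\phi)\circ\Delta$ are algebra maps into $\Ho^*\otimes\Ho^*$ that agree on the generators $\mathrm{P}\Ho^*$ (each primitive $x$ is sent to $x\otimes 1+1\otimes x$), hence they agree. Because $\phi$ is built functorially from the inclusion of primitives and $\mathrm{P}(-)$ is a functor, $\phi$ is natural in $\Ho^*$, which is exactly what the later application to endomorphisms $\sigma^*$ needs.

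For injectivity I would use the coalgebra side. In characteristic $0$ the primitives of a free graded-commutative algebra are precisely its generators, so $\mathrm{P}\bigl(\bigwedge^*\mathrm{P}\Ho^*\bigr)=\mathrm{P}\Ho^*$ and $\phi$ restricts to the identity there; in particular $\phi$ is injective on primitives. I would then invoke the standard fact that a morphism of connected graded coalgebras that is injective on primitives is itself injective (induction along the coradical filtration), concluding that $\phi$ is injective.

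For surjectivity I would pass to the algebra side: by the graded Nakayama lemma it suffices that $\phi$ be surjective on indecomposables, i.e.\ that the composite $\mathrm{P}\Ho^*\hookrightarrow\Ho^+\twoheadrightarrow\mathrm{Q}\Ho^*$ be surjective, which is to say that the primitives generate $\Ho^*$ as an algebra. This is the heart of the theorem and the single point where $\chr(l)=0$ is indispensable. I would establish it via the Hopf--Leray argument: inducting on degree, one lifts a basis of $\mathrm{Q}\Ho^*$ into $\Ho^+$ and corrects each lift by decomposables so as to make it primitive, the correction at each stage being solvable precisely because the integer multiplicities produced by the reduced coproduct are invertible in $l$; alternatively one dualises to the connected graded-cocommutative Hopf algebra $\bigoplus_n(\Ho^n)^*$, where the statement follows from the Milnor--Moore identification with a universal enveloping algebra together with Poincaré--Birkhoff--Witt. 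I expect this generation step to be the main obstacle; in characteristic $p$ it genuinely fails, since a $p$-th power can be simultaneously decomposable and primitive and the conclusion is then false. Granting it, $\phi$ is an isomorphism of graded Hopf algebras and $\Ho^*=\bigwedge^*\mathrm{P}\Ho^*$.

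Finally, for the statement about $\Ho^*(G)$ I would observe that this algebra is finite-dimensional, which forces $\mathrm{P}\Ho^*(G)$ to be concentrated in odd degrees. Indeed, if $x$ were a nonzero primitive of even degree then $\Delta(x^n)=\sum_{k=0}^n\binom{n}{k}x^k\otimes x^{n-k}$, and the summands lie in pairwise distinct bidegrees, so no power $x^n$ can vanish (a vanishing $x^n$ would force some $\binom{n}{k}=0$ with $1\leq k\leq n-1$, impossible in characteristic $0$); this would produce an infinite-dimensional polynomial subalgebra. Hence $\bigwedge^*\mathrm{P}\Ho^*(G)$ is an honest exterior algebra and the displayed isomorphism is the one asserted.
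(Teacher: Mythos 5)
The paper offers no proof of this statement---it is quoted from Cartier---so there is nothing internal to compare against; your outline follows the standard route of the cited source. The construction of the canonical algebra map $\phi:\bigwedge^*\mathrm{P}\Ho^*\to\Ho^*$, the check that it is a Hopf map, the injectivity argument (primitives of the free graded-commutative algebra are its generators in characteristic $0$, plus the lemma that a morphism of connected graded coalgebras injective on primitives is injective), and the graded Nakayama reduction of surjectivity to surjectivity of $\mathrm{P}\Ho^*\to\mathrm{Q}\Ho^*$ are all correct.

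The gap sits exactly in the step you flag as the heart, and your diagnosis of why it is delicate is off: the obstacle is not only the characteristic. For a connected graded-commutative Hopf algebra of finite type over a field of characteristic $0$, the map $\mathrm{P}\to\mathrm{Q}$ need \emph{not} be surjective. The shuffle Hopf algebra on a $2$-dimensional space placed in degree $2$ (shuffle product, deconcatenation coproduct) is commutative, connected, of finite type over $\Q_l$, yet its primitives are exactly the degree-$2$ part while $\dim\Ho^{2n}=2^n$, so it is not generated by primitives and is not $\bigwedge^*\mathrm{P}\Ho^*$. Concretely, the lift $ab$ of the length-$2$ indecomposable cannot be corrected by decomposables to a primitive: $\bar\Delta$ of any decomposable is symmetric, while $\bar\Delta(ab)$ has a nonzero antisymmetric part, so invertibility of the integer multiplicities is not the issue. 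Likewise your dual route (Milnor--Moore plus PBW) identifies $\Ho^*$ with the free graded-commutative algebra on $\mathrm{Q}\Ho^*$ (Leray's theorem), not on $\mathrm{P}\Ho^*$; the two differ by $[\mathfrak{g},\mathfrak{g}]$ for $\mathfrak{g}$ the primitives of the dual. The missing input is finite-dimensionality: it forces all generators into odd degrees, hence forces $\mathfrak{g}$ (concentrated in odd degrees, with brackets landing in even degrees) to be abelian, after which $\mathrm{P}\to\mathrm{Q}$ is an isomorphism and either of your routes closes. You do prove the odd-degree concentration in your final paragraph, but you deploy it only to read $\bigwedge^*$ as an honest exterior algebra; it must instead be fed into the surjectivity argument, where it is indispensable. (The same caveat applies to the theorem as stated in the paper with only finite type assumed; it is harmless there because $\Ho^*(G)$ is finite-dimensional.)
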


Using the primitive subspace allows us to formulate the property $(*)$ in a simpler way.

\begin{lemma} \label{primitive}
Let $e\to N\overset{\iota}{\to} G\overset{\pi}{\to} Q\to e$ be an exact sequence of group varieties. The sequence has~$(*)$ if and only if $0\to \mathrm{P}\Ho^*(Q)\overset{\pi^*}{\to} \mathrm{P}\Ho^*(G)\overset{\iota^*}{\to} \mathrm{P}\Ho^*(N)\to 0$ is an exact sequence of graded $\Q_l$-vector spaces.
\begin{proof}
If the sequence of primitive subspaces is exact, then any section $s$ to $\iota^*$ induces an isomorphism: \begin{equation*}\Ho^*(N)\otimes \Ho^*(Q)=\bigwedge^*\mathrm{P}\Ho^*(N)\otimes \bigwedge^*\mathrm{P}\Ho^*(Q)\overset{\pi^*\otimes s}{\to} \bigwedge^*\mathrm{P}\Ho^*(G)=\Ho^*(G)
\end{equation*}
Conversely if the sequence has $(*)$, then note that pullback homomorphisms give Hopf-algebra homomorphisms on cohomology, which preserve primitive subspaces. It follows that $\iota^*:\mathrm{P}\Ho^*(G)\overset{\iota^*}{\to} \mathrm{P}\Ho^*(N)$ is surjective and that $\pi^*:\mathrm{P}\Ho^*(Q)\overset{\pi^*}{\to} \mathrm{P}\Ho^*(G)$ is injective. Moreover $\dim \mathrm{P}\Ho^*(Q)+ \dim \mathrm{P}\Ho^*(N)= \mathrm{P}\Ho^*(G)$ and $\ker(\pi^*)\subset \mathrm{Im}(\pi^*)$, so we obtain an exact sequence of primitive subspaces.
\end{proof}
\end{lemma}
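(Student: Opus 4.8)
The plan is to route everything through Hopf's structure theorem (Theorem \ref{hopfstructuur}), which identifies each cohomology ring with the exterior algebra on its primitive subspace, and to do so naturally, so that the algebra maps $\iota^*$ and $\pi^*$ are completely determined by their restrictions to primitives. Since pullbacks along homomorphisms are Hopf-algebra homomorphisms, they preserve primitive subspaces, so the horizontal sequence $0\to\mathrm{P}\Ho^*(Q)\to\mathrm{P}\Ho^*(G)\to\mathrm{P}\Ho^*(N)\to 0$ makes sense. Both directions of the equivalence then become comparisons between the exterior algebra on $\mathrm{P}\Ho^*(G)$ and the graded tensor product of the exterior algebras on $\mathrm{P}\Ho^*(N)$ and $\mathrm{P}\Ho^*(Q)$, reducing algebra-level statements to linear algebra on a single short sequence of finite-dimensional graded $\Q_l$-vector spaces.

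For the ``if'' direction I would assume the primitive sequence is exact. As a sequence of graded vector spaces it splits, so I choose a graded linear section $\sigma\colon\mathrm{P}\Ho^*(N)\to\mathrm{P}\Ho^*(G)$ to $\iota^*$ on primitives. By the freeness built into Hopf's theorem, $\sigma$ extends uniquely to a graded-algebra map $s\colon\Ho^*(N)=\bigwedge^*\mathrm{P}\Ho^*(N)\to\bigwedge^*\mathrm{P}\Ho^*(G)=\Ho^*(G)$, and since $\iota^*s=\id$ on generators it holds on all of $\Ho^*(N)$, so $s$ is an algebra section to $\iota^*$. The map $s\otimes\pi^*$ then acts on primitive generators as $(\sigma,\pi^*)\colon\mathrm{P}\Ho^*(N)\oplus\mathrm{P}\Ho^*(Q)\to\mathrm{P}\Ho^*(G)$, which is an isomorphism precisely because the primitive sequence is short exact. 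An isomorphism on primitive generators extends to an isomorphism of the corresponding exterior algebras, so $s\otimes\pi^*$ is a graded-algebra isomorphism and $(*)$ holds.

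For the ``only if'' direction I would assume $(*)$: a section $s$ to $\iota^*$ for which $s\otimes\pi^*$ is an algebra isomorphism. I record three facts on primitives. Since $\pi\circ\iota$ is the trivial homomorphism, $\iota^*\pi^*=0$ in positive degrees, giving $\Ima(\pi^*)\subseteq\ker(\iota^*)$. Since $\iota^*s=\id$ and $s$ preserves primitives, the restriction of $\iota^*$ to $\mathrm{P}\Ho^*(G)$ surjects onto $\mathrm{P}\Ho^*(N)$. Finally, $s\otimes\pi^*$ restricts to an isomorphism of primitive subspaces and, using $\mathrm{P}(A\otimes B)=\mathrm{P}A\oplus\mathrm{P}B$, I obtain both the injectivity of $\pi^*$ on primitives and the dimension identity $\dim\mathrm{P}\Ho^*(G)=\dim\mathrm{P}\Ho^*(N)+\dim\mathrm{P}\Ho^*(Q)$ in each degree. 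Combining injectivity of $\pi^*$, surjectivity of $\iota^*$, the inclusion $\Ima(\pi^*)\subseteq\ker(\iota^*)$, and the dimension count forces exactness at the middle term.

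The hard part will be the interplay between the algebra structure and the section: the definition of $(*)$ only requires $s$ to be a linear section to $\iota^*$, yet $s\otimes\pi^*$ must be an algebra homomorphism, which effectively forces $s$ to be multiplicative. I expect the crux to be arranging the argument so that Hopf's theorem carries the weight, converting every algebra-level claim into a linear-algebra claim about the one short sequence of primitives, and verifying that the natural identification $\mathrm{P}(A\otimes B)=\mathrm{P}A\oplus\mathrm{P}B$ is compatible with the cup-product map underlying $s\otimes\pi^*$. Once these compatibilities are in place, both implications collapse to the splitting and exactness of a short sequence of finite-dimensional graded vector spaces.
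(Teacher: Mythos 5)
Your overall route is the same as the paper's: both directions are reduced, via Hopf's theorem (Theorem \ref{hopfstructuur}), to linear algebra on the short sequence of primitive subspaces, and your forward direction (split the primitive sequence by a graded linear section, extend freely to exterior algebras, check the map on generators) is exactly the paper's argument.

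In the converse direction, however, one step as you state it would fail. The section $s$ in the definition of $(*)$ is not a Hopf-algebra map: nothing forces it to be compatible with the comultiplication, so neither ``$s$ preserves primitives'' nor ``$s\otimes\pi^*$ restricts to an isomorphism of primitive subspaces'' is available. Concretely, $s$ may send a primitive of $\Ho^*(N)$ to a primitive of $\Ho^*(G)$ plus a decomposable element of $\ker(\iota^*)$, and then $s(\mathrm{P}\Ho^*(N))\not\subseteq\mathrm{P}\Ho^*(G)$. The two conclusions you draw from these claims are nevertheless correct, but they must be extracted from the genuine Hopf-algebra maps $\iota^*$ and $\pi^*$ alone. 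For surjectivity of $\iota^*$ on primitives: $\iota^*$ is surjective on cohomology (it admits the linear section $s$), hence surjective on indecomposables $\Ho^{*,+}/(\Ho^{*,+})^2$; by the structure theorem the natural map $\mathrm{P}\Ho^*\to\Ho^{*,+}/(\Ho^{*,+})^2$ is an isomorphism in characteristic $0$, and it is functorial for Hopf-algebra maps, so $\iota^*$ is surjective on primitives. For the dimension identity: any isomorphism of graded $\Q_l$-algebras $\bigwedge^*V\cong\bigwedge^*W$ with $V,W$ finite-dimensional and concentrated in positive degrees induces an isomorphism $V\cong W$ on indecomposables, so $(*)$ yields $\dim\mathrm{P}\Ho^r(G)=\dim\mathrm{P}\Ho^r(N)+\dim\mathrm{P}\Ho^r(Q)$ for each $r$ without any coalgebra compatibility of $s\otimes\pi^*$. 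Injectivity of $\pi^*$ follows, as you say, by restricting the isomorphism $s\otimes\pi^*$ to $1\otimes\Ho^*(Q)$. With these replacements your concluding count (injectivity, surjectivity, $\Ima(\pi^*)\subseteq\ker(\iota^*)$, equal dimensions) closes the argument exactly as in the paper.
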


\section{The proof}

Now we move towards the proof of Theorem \ref{result} as described in Strategy \ref{strategy}.

\subsection*{Step 1}
First we deal with the case that $G$ is commutative, so $G$ will be a commutative group variety until we say that this is no longer the case. Let $\mathcal{C}$ denote the category of commutative algebraic groups.

\begin{prop}\label{Ext and H1}
For all $n\geq 1$ there is an isomorphism of functors $\Ext(-,\Z/l^n\Z)\to \Ho^1(-,\Z/l^n\Z)$, where both are seen as functors $\mathcal{C}\to \Grp$.
\end{prop}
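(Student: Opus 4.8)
The plan is to write down the natural comparison map from extensions to torsors and to prove it is bijective on commutative group varieties. First I would construct the morphism of functors $\Phi\colon\Ext(-,\Z/l^n\Z)\to\Ho^1(-,\Z/l^n\Z)$. Given an extension $0\to\Z/l^n\Z\to E\overset{\pi}{\to}G\to 0$ in $\mathcal{C}$, the map $\pi$ is faithfully flat and the translation action of $\Z/l^n\Z\subset E$ makes $E$ a torsor under $\Z/l^n\Z$ over $G$; as $l\neq\chr(k)$ this group is finite étale, so the torsor is classified by a class $\Phi_G([E])\in\Ho^1(G,\Z/l^n\Z)=\Het^1(G,\Z/l^n\Z)$. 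Since the Baer sum of extensions corresponds to the sum of the associated torsor classes, and pulling an extension back along a homomorphism pulls back its torsor, $\Phi$ is a morphism of functors. It then suffices to show $\Phi_G$ is bijective for every commutative group variety $G$.

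For injectivity, suppose $\Phi_G([E])=0$, so the torsor $\pi\colon E\to G$ is trivial. Trivialising it through the identity section gives an isomorphism $E\cong G\times\Z/l^n\Z$ of pointed $G$-schemes with $\Z/l^n\Z$-action, under which the group law of $E$ takes the form $(g_1,a_1)\cdot(g_2,a_2)=(g_1g_2,\,a_1+a_2+f(g_1,g_2))$ for a morphism (factor set) $f\colon G\times G\to\Z/l^n\Z$ with $f(e,-)=f(-,e)=0$. Because $G\times G$ is connected and $\Z/l^n\Z$ is a finite constant scheme, $f$ is constant, hence identically $0$; thus $E$ is the split extension and $[E]=0$. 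This is the step that uses that $G$ is connected: the statement genuinely fails for disconnected $G$, for instance $\Ext(\Z/l\Z,\Z/l\Z)=\Z/l\Z$ while $\Het^1(\Z/l\Z,\Z/l\Z)=0$.

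For surjectivity, use that $G$ is connected to identify $\Ho^1(G,\Z/l^n\Z)=\Hom(\pi_1(G),\Z/l^n\Z)$, with $\pi_1(G)$ the étale fundamental group. Given $\varphi$ in this group, put $A'=\Ima(\varphi)\subseteq\Z/l^n\Z$; the surjection $\pi_1(G)\twoheadrightarrow A'$ classifies a connected Galois cover $\rho\colon E'\to G$ with group $A'$, pointed over $e$. I would equip $E'$ with a commutative group structure by lifting $\mu$, $\inv$ and $e$ along $\rho$: the lift of $\mu$ exists because the cover $(\mu\circ(\rho\times\rho))^*E'$ on $E'\times E'$ is trivial, which follows from $\rho^*\varphi=0$ (pulling a cover back along its own structure map yields a cover with the diagonal section) together with the primitivity $\mu^*\varphi=p_1^*\varphi+p_2^*\varphi$; uniqueness of pointed lifts through $\rho$ then forces associativity, commutativity and the unit and inverse axioms. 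This realises $\varphi$, corestricted to $A'$, as the extension $0\to A'\to E'\to G\to 0$, and pushing out along $A'\hookrightarrow\Z/l^n\Z$ produces an extension $E$ with $\Phi_G([E])=\varphi$.

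The main obstacle is the primitivity $\mu^*\varphi=p_1^*\varphi+p_2^*\varphi$ invoked in the surjectivity argument. Applying the sections $i_1,i_2\colon G\to G\times G$, $g\mapsto(g,e)$ and $g\mapsto(e,g)$, shows that $\mu^*\varphi-p_1^*\varphi-p_2^*\varphi$ lies in the kernel of $(i_1^*,i_2^*)$; since $p_1^*\oplus p_2^*$ splits this map, primitivity of every class is equivalent to the degree-one Künneth isomorphism $\Het^1(G\times G,\Z/l^n\Z)\cong\Het^1(G,\Z/l^n\Z)^{\oplus 2}$. The only obstruction to the latter is the Künneth correction $\Tor_1^{\Z/l^n\Z}(\Het^1(G,\Z/l^n\Z),\Het^1(G,\Z/l^n\Z))$, which vanishes once $\Het^1(G,\Z/l^n\Z)$ is free over $\Z/l^n\Z$. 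I would deduce this freeness from the fact that $\Het^1(G,\Z_l)=\Hom(\pi_1(G),\Z_l)$ is torsion-free; establishing it (equivalently, the absence of $l$-torsion in the low-degree integral cohomology of $G$) is the real content of the argument, everything else being formal.
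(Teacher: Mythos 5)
Your proposal is necessarily a different route from the paper's, because the paper's ``proof'' of this proposition is two sentences of citations: the natural transformation is attributed to Serre (Chapitre VII) and its bijectivity to Miyanishi (Lemma 4). What you have written is essentially a reconstruction of the classical argument behind those citations: the comparison map via the finite \'etale torsor underlying an extension, injectivity because a factor set $f\colon G\times G\to\Z/l^n\Z$ is a morphism from a connected scheme to a finite discrete one and hence vanishes, and surjectivity by the Serre--Lang lifting of $\mu$, $\inv$, $e$ to the pointed connected cover classified by $\pi_1(G)\twoheadrightarrow\Ima(\varphi)$. All of that is correct, and your remark that connectedness is genuinely needed (the map is not bijective for $G=\Z/l\Z$, where $\Ext(\Z/l\Z,\Z/l\Z)\cong\Z/l\Z$ but $\Het^1$ vanishes) is a legitimate observation about the hypothesis on $\mathcal{C}$, which the paper only implicitly restricts to group varieties.

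There is, however, a genuine gap exactly where you say ``the real content'' lies, and the fix you sketch does not work. You reduce primitivity of degree-one classes to the vanishing of $\Tor_1^{\Z/l^n\Z}\bigl(\Het^1(G,\Z/l^n\Z),\Het^1(G,\Z/l^n\Z)\bigr)$, hence to freeness of $\Het^1(G,\Z/l^n\Z)$ over $\Z/l^n\Z$, and propose to deduce that from torsion-freeness of $\Het^1(G,\Z_l)=\Hom(\pi_1(G),\Z_l)$. But $\Hom(M,\Z_l)$ is torsion-free for \emph{every} profinite group $M$, so this hypothesis carries no information: if the pro-$l$ abelianization of $\pi_1(G)$ were, say, $\Z_l\oplus\Z/l\Z$, then $\Hom(\pi_1(G),\Z_l)$ would still be torsion-free while $\Hom(\pi_1(G),\Z/l^n\Z)\cong\Z/l^n\Z\oplus\Z/l\Z$ would not be free and your K\"unneth correction term would not vanish. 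What you actually need is that $\pi_1(G)^{\mathrm{ab}}\otimes_{\widehat{\Z}}\Z_l$ is a free $\Z_l$-module, i.e.\ the identification of the pro-$l$ fundamental group of a connected commutative group variety with the (free) Tate module $\varprojlim_m G[l^m]\cong\Z_l^{2g+r}$ ($g$ the abelian-variety dimension, $r$ the torus rank, the unipotent part contributing nothing since $l\neq\chr(k)$). That identification is not formal for non-proper $G$ and is, in substance, the content of the very lemma of Miyanishi being reproved, so as structured your argument risks circularity. Two ways to close it honestly: either establish $\pi_1^{(l)}(G)\cong T_l(G)$ directly (e.g.\ by showing every connected $\Z/l^n\Z$-cover of $G$ is dominated by $[l^n]\colon G\to G$, which is again a Serre--Lang-type argument but now with a concrete cofinal family), or replace the cohomological K\"unneth formula by the K\"unneth formula for the maximal prime-to-$p$ quotient of the fundamental group, which gives $\Hom(\pi_1(G\times_k G),\Z/l^n\Z)\cong\Hom(\pi_1(G),\Z/l^n\Z)^{\oplus 2}$ with no freeness hypothesis and makes every class primitive at once.
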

\begin{proof}
That there is a natural transformation is well-known and may be found in \cite{Serre} Chapter 7. Miyanishi (\cite{Miyanishi} Lemma 4) shows that it is in fact an isomorphism.
\end{proof}

\begin{lemma}\label{H1}
There is an exact sequence $0\to \Ho^1(\Gab)\overset{\pi^*}{\to} \Ho^1(G)\overset{\iota^*}{\to} \Ho^1(\Glin)\to 0$ of $\Q_l$-vector spaces.
\begin{proof}
Since $\Q_l$ is flat over $\Z_l$, it suffices to prove it with $\Z_l$ coefficients. As the groups $\Ho^1(G,\Z/l^n\Z)$ are finite, we obtain that passing to the limit preserves exactness by the Mittag-Leffler condition. So it suffices to show that the above statement holds with $\Z/l^n\Z$ coefficients. By Proposition \ref{Ext and H1} this translates into a statement about $\Ext(-,\Z/l^n\Z)$, which is true by Theorem 7.13 in \cite{Serre}.
\end{proof}
\end{lemma}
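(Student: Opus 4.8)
The plan is to change coefficients until the statement becomes a purely algebraic assertion about $\Ext$-groups in the category $\mathcal{C}$, where Serre's results apply. Because $\Q_l$ is flat over $\Z_l$, tensoring with $\Q_l$ preserves exactness, so it suffices to prove the corresponding sequence with $\Z_l$-coefficients. Writing $\Ho^1(-,\Z_l)=\varprojlim_n\Ho^1(-,\Z/l^n\Z)$ and using that each group $\Ho^1(X,\Z/l^n\Z)$ is finite, the inverse system satisfies the Mittag-Leffler condition, so passing to the limit is exact. Hence I would reduce to showing, for every $n\geq 1$, that
\[0\to\Ho^1(\Gab,\Z/l^n\Z)\overset{\pi^*}{\to}\Ho^1(G,\Z/l^n\Z)\overset{\iota^*}{\to}\Ho^1(\Glin,\Z/l^n\Z)\to 0\]
is exact.

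Next I would translate this into a statement about $\Ext$. By Proposition \ref{Ext and H1} the functors $\Ho^1(-,\Z/l^n\Z)$ and $\Ext(-,\Z/l^n\Z)$ are naturally isomorphic on $\mathcal{C}$, so the maps $\pi^*$ and $\iota^*$ correspond to the pullback maps on $\Ext$. Since $e\to\Glin\overset{\iota}{\to}G\overset{\pi}{\to}\Gab\to e$ is a short exact sequence in the abelian category $\mathcal{C}$, applying the contravariant functor $\Ext(-,\Z/l^n\Z)$ produces a long exact sequence
\[\Hom(\Glin,\Z/l^n\Z)\to\Ext(\Gab,\Z/l^n\Z)\overset{\pi^*}{\to}\Ext(G,\Z/l^n\Z)\overset{\iota^*}{\to}\Ext(\Glin,\Z/l^n\Z)\overset{\delta}{\to}\Ext^2(\Gab,\Z/l^n\Z).\]
The leftmost term vanishes: $\Glin$ is connected while $\Z/l^n\Z$ is finite and étale, so any homomorphism $\Glin\to\Z/l^n\Z$ has connected, hence trivial, image. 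By exactness this already yields injectivity of $\pi^*$, and exactness in the middle is automatic. It therefore remains to prove that $\iota^*$ is surjective, equivalently that the boundary map $\delta$ vanishes.

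The surjectivity on the right is the crux and the step I expect to be the main obstacle, since it is not formal: it requires controlling the higher extension group $\Ext^2(\Gab,\Z/l^n\Z)$ of the abelian variety rather than relying on connectedness or dimension bookkeeping. This is exactly where I would invoke Serre's computation of extensions of commutative algebraic groups by finite groups (\cite{Serre}, Theorem 7.13), which forces $\delta=0$ and hence gives the short exact sequence with $\Z/l^n\Z$-coefficients. Reversing the chain of reductions — first reassembling the $\Z_l$-coefficient sequence as an exact inverse limit, then tensoring with $\Q_l$ — then delivers the claimed exact sequence of $\Q_l$-vector spaces.
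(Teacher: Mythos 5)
Your proposal is correct and follows essentially the same route as the paper: reduce by flatness of $\Q_l$ over $\Z_l$ and the Mittag--Leffler condition to the statement with $\Z/l^n\Z$-coefficients, convert $\Ho^1$ to $\Ext$ via Proposition \ref{Ext and H1}, and then invoke Serre's theorem on extensions of commutative algebraic groups for the exactness of the resulting $\Ext$-sequence. The only cosmetic difference is that you unpack Serre's input as ``long exact sequence plus vanishing of the boundary into $\Ext^2$,'' whereas the paper cites the exactness statement from \cite{Serre} directly; the mathematical content is the same.
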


We can now prove the following proposition.

\begin{prop}\label{commutative}
When $G$ is commutative the sequence $e\to \Glin \to G\to G\ab\to e$ has $(*)$.
\begin{proof}
It suffices to show that $\Ho^1(G)=\mathrm{P}\Ho^*(G)$ by Lemma \ref{primitive} and Lemma \ref{H1}. So we need to show that for all $x\in \Ho^1(G)$, we have $\mu^*(x)=p_1^*(x)+p_2^*(x)$. On the degree $1$ part a section to the Künneth isomorphism $\Ho^*(G\times_k G)\to \Ho^*(G)\otimes \Ho^*(G)$ is given by $x\mapsto \iota_1^*(x)\otimes 1+1\otimes \iota_2^*(x)$ for $\iota_1:G\to G\times_k \{e\}$ and $\iota_2:G\to \{e\}\times_k G$. So for $x\in \Ho^1(G)$ we have that $\iota_1^*\mu^*(x)\otimes 1+1\otimes \iota_2^*\mu^*(x)=x\otimes 1+1\otimes x$. Applying $p_1^*\cup p_2^*$ to this element shows $\mu^*(x)=p_1^*(x)+p_2^*(x)$ and hence $\Ho^1(G)\subset  \mathrm{P}\Ho^*(G)$. Now by Theorem \ref{hopfstructuur} it suffices to show that:\begin{equation*} \label{cd}\Ho^r(G)=0 \text{ for all } r>\dim \Ho^1(G) \end{equation*}
Since $\Glin$ is commutative it is an extension of a torus $T$ by a unipotent group $\Ru(\Glin)$. Let $H=G/\Ru(\Glin)$. As $\Ru(\Glin)$ is isomorphic to an affine space, it follows that $\Ho^*(H)\to \Ho^*(G)$ is an isomorphism by looking at the leray spectral sequence, so it suffices to prove (\ref{cd}) for $H$ and so we may assume that $G$ is the extension of an abelian variety by a torus. In this case we have $\Ho^1(\Glin)=\Q_l^n$ and $\Ho^1(\Gab)=\Q_l^{2g}$ where $n=\dim(\Glin)$ and $g=\dim(\Gab)$. Since the abelian variety and torus have cohomological dimension $2g$ and $n$ respectively, the Leray spectal gives that $\Ho^r(G)=0$ for all $r>2g+n$. By Lemma \ref{H1} we have $\dim\Ho^1(G)=2g+n$, which gives that (\ref{cd}) holds and thus the proposition holds.
\end{proof}
\end{prop}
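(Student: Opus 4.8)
The plan is to invoke Lemma \ref{primitive}, which reduces property $(*)$ for the sequence $e\to\Glin\to G\to\Gab\to e$ to the exactness of
\[
0\to \mathrm{P}\Ho^*(\Gab)\overset{\pi^*}{\to} \mathrm{P}\Ho^*(G)\overset{\iota^*}{\to} \mathrm{P}\Ho^*(\Glin)\to 0 .
\]
Since Lemma \ref{H1} already supplies the exact sequence $0\to\Ho^1(\Gab)\to\Ho^1(G)\to\Ho^1(\Glin)\to 0$, the whole proposition follows once I show that for a commutative group variety $X$ the primitive subspace is concentrated in degree one, namely $\mathrm{P}\Ho^*(X)=\Ho^1(X)$. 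Applying this identity to each of the three commutative group varieties $\Glin$, $G$ and $\Gab$ then identifies the displayed sequence of primitive subspaces with the sequence of Lemma \ref{H1}, giving its exactness.

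For the inclusion $\Ho^1(X)\subseteq\mathrm{P}\Ho^*(X)$ I would compute the coproduct $\mu^*$ on degree one classes using the Künneth isomorphism. A section of the Künneth map in degree one is $x\mapsto \iota_1^*(x)\otimes 1+1\otimes\iota_2^*(x)$, where $\iota_1,\iota_2$ embed $X$ as $X\times_k\{e\}$ and $\{e\}\times_k X$. Because $\mu\circ\iota_i=\id$, applying this section to $\mu^*(x)$ for $x\in\Ho^1(X)$ yields $x\otimes 1+1\otimes x$, so that $\mu^*(x)=p_1^*(x)+p_2^*(x)$ and $x$ is primitive.

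For the reverse inclusion I would appeal to Hopf's structure theorem (Theorem \ref{hopfstructuur}), which gives $\Ho^*(X)=\bigwedge^*\mathrm{P}\Ho^*(X)$; since the total cohomology is finite dimensional, all primitive generators lie in odd degree. Having already identified $\mathrm{P}\Ho^1(X)=\Ho^1(X)$, it then suffices to prove the vanishing $\Ho^r(X)=0$ for all $r>\dim\Ho^1(X)$, for this forces the top nonvanishing degree of $\bigwedge^*\mathrm{P}\Ho^*(X)$ to equal $\dim\Ho^1(X)$ and hence rules out any primitive generator of degree $\geq 3$. To establish the vanishing I would first quotient out the unipotent radical: writing $H=X/\Ru(\Glin)$ and using that $\Ru(\Glin)$ is an affine space, the Leray spectral sequence for $X\to H$ shows $\Ho^*(H)\to\Ho^*(X)$ is an isomorphism, so I may assume $X$ is an extension of an abelian variety of dimension $g$ by a torus of dimension $n$. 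These factors have cohomological dimensions $2g$ and $n$, so the Leray spectral sequence for $\pi:X\to\Gab$ gives $\Ho^r(X)=0$ for $r>2g+n$, while Lemma \ref{H1} yields $\dim\Ho^1(X)=2g+n$.

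The main obstacle is precisely this final numerical coincidence: one must simultaneously control the top nonvanishing cohomological degree of $X$ through the cohomological dimensions of its torus and abelian variety factors, and know that $\dim\Ho^1(X)$ attains exactly the same value $2g+n$, so that the top degree of $\Ho^*(X)$ matches that of $\bigwedge^*\Ho^1(X)$. Once this is in place, every other step is formal.
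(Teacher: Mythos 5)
Your proposal is correct and follows essentially the same route as the paper: the same Künneth-section argument for $\Ho^1(G)\subseteq\mathrm{P}\Ho^*(G)$, the same appeal to Hopf's structure theorem to reduce to the vanishing $\Ho^r(G)=0$ for $r>\dim\Ho^1(G)$, and the same reduction modulo the unipotent radical followed by the $2g+n$ dimension count via Lemma \ref{H1}. Your explicit remark that the identity $\mathrm{P}\Ho^*=\Ho^1$ must be applied to all three commutative groups $\Glin$, $G$, $\Gab$ is a useful clarification of a step the paper leaves implicit, but it is not a different argument.
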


From now on we no longer require $G$ to be commutative.

\begin{prop}\label{dimension}
Let $G$ be a group variety and consider $e\to \Glin\to G\to \Gab\to e$. There exists \textbf{an} isomorphism of graded $\Q_l$-vector spaces $\mathrm{P}\Ho^*(\Glin)\oplus \mathrm{P}\Ho^*(\Gab)\cong \mathrm{P}\Ho^*(G)$.
\begin{proof}
Let $D:=\Gant$, which is a commutative group variety. We have an exact sequence of algebraic groups $0\to \ker\overset{\iota}{\to} D\times \Glin\overset{m}{\to} G\to 0$, where $\ker\cong D\cap \Glin $ and the map $D\cap \Glin\to D\times \Glin$ is given by $a\mapsto (-a,a)$. This gives rise to the following commuting diagram:
\begin{equation}\label{commuting diagram}
\begin{tikzcd}
e \arrow[r] & D\lin \arrow[r, "j'"] \arrow[d, "\iota\lin", hook] & D\times_k \Glin \arrow[r, "m'"] \arrow[d, "\Id"] & Q \arrow[d, "q", two heads] \arrow[r] & e \\
e \arrow[r] & D\cap \Glin \arrow[r, "j"]                         & D\times_k \Glin \arrow[r] \arrow[r, "m"]         & G \arrow[r]                             & e
\end{tikzcd}
\end{equation}

By Proposition \ref{commutative} the embedding $-\iota_D:D\lin\to D$ induces a surjection $\Ho^*(D)\to \Ho^*(D\lin)$. Moreover $\Ho^*(D\times_k \Glin)\to \Ho^*(D)$ is surjective by Künneth, and hence the pullback $(j')^*:\Ho^*(D\times_k \Glin)\to \Ho^*(D\lin)$ is surjective.
Since $D_{\text{lin}}$ is a commutative linear group variety, the fiber bundle $D\times_k \Glin\to Q$ is Zariski locally trivial by Proposition 7.6 in \cite{Serre}. \\
These two statements imply that the Leray-Hirsch principle applies (see \cite{husemoller1994fibre} p.245 for a proof in topology, which carries over to our case since the fibration is Zariski locally trivial implying that the Mayer-Vietoris argument of the reference can be used). This gives an isomorphism $\Ho^*(\Glin\times_k D)\cong \Ho^*(Q)\otimes \Ho^*(D_{\text{lin}})$ as $\Ho^*(Q)$-modules. By the Künneth formula we have $\Ho^*(\Glin\times_k D)\cong \Ho^*(\Glin)\otimes \Ho^*(D)$ and by Proposition \ref{commutative} we obtain that:\begin{equation} \label{iso}\Ho^*(Q)\otimes \Ho^*(D_{\text{lin}})\cong \Ho^*(\Glin)\otimes \Ho^*(D_{\text{lin}})\otimes \Ho^*(D\ab)\end{equation}

Since $D\lin$ has finite index in $D\cap \Glin$, the map $q$ is an isogeny. This map induces a homomorphism $q\ab:Q\ab\to G\ab$. Since $G=D\cdot \Glin$ it also follows that $q\ab$ is an isogeny. An isogeny induces an isomorphism on the cohomology. This means that the isomorphism in (\ref{iso}) reads:
\begin{equation*}
    \Ho^*(G)\otimes \Ho^*(D_{\text{lin}})\cong \Ho^*(\Glin)\otimes \Ho^*(\Gab)\otimes \Ho^*(D\lin)
\end{equation*}

Since $\Ho^*(D_{\text{lin}})$ is a nonzero graded $\Q_l$-vector space we see that there is an isomorphism of graded $\Q_l$-vector space isomorphism $\Ho^*(G)\cong \Ho^*(\Gab)\otimes \Ho^*(\Glin)$. Hence $\bigwedge^* \mathrm{P}\Ho^*(G)\cong \bigwedge^* (\mathrm{P}\Ho^*(\Glin)\oplus \mathrm{P}\Ho^*(\Gab))$ and so we obtain that: \begin{equation*}\mathrm{P}\Ho^*(G)\cong \mathrm{P}\Ho^*(\Glin)\oplus \mathrm{P}\Ho^*(\Gab)\end{equation*}
\end{proof}
\end{prop}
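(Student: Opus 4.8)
The plan is to deduce the isomorphism of primitive subspaces from an isomorphism of the underlying graded cohomology rings. By Theorem \ref{hopfstructuur} each of $\Ho^*(G)$, $\Ho^*(\Glin)$, $\Ho^*(\Gab)$ is the exterior algebra on its primitive subspace, and $\bigwedge^*(\mathrm{P}\Ho^*(\Glin)\oplus\mathrm{P}\Ho^*(\Gab))\cong\Ho^*(\Glin)\otimes\Ho^*(\Gab)$. Hence it suffices to construct an isomorphism of graded $\Q_l$-vector spaces $\Ho^*(G)\cong\Ho^*(\Glin)\otimes\Ho^*(\Gab)$; comparing graded Poincaré series then forces $\dim\mathrm{P}\Ho^n(G)=\dim\mathrm{P}\Ho^n(\Glin)+\dim\mathrm{P}\Ho^n(\Gab)$ in every degree $n$, since the Poincaré series of an exterior algebra determines the graded dimensions of its generators. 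This is exactly why it is enough to work with the full rings and forget the multiplicative structure.

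The obstacle is that the extension $e\to\Glin\to G\to\Gab\to e$ need not split, so Künneth does not apply to $G$ directly. To circumvent this I would bring in the anti-affine subgroup variety $D:=\Gant$, which is commutative and central in $G$ and satisfies $G=D\cdot\Glin$. Centrality makes the multiplication $m:D\times_k\Glin\to G$ a surjective homomorphism whose kernel is $D\cap\Glin$, embedded antidiagonally. Writing $D\lin$ for the linear part of $D$, one has $D\lin\subseteq D\cap\Glin$ with finite index, so forming the quotient $Q:=(D\times_k\Glin)/D\lin$ produces an isogeny $q:Q\to G$ and fits everything into a single commuting diagram of extensions. The gain is that $D\times_k\Glin\to Q$ is now a fibration with the commutative linear group $D\lin$ as fibre.

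The heart of the argument is to compute $\Ho^*(D\times_k\Glin)$ twice. On one side, $D\lin$ is a commutative linear group variety, so by Proposition 7.6 of \cite{Serre} the fibration $D\times_k\Glin\to Q$ is Zariski-locally trivial; moreover its restriction to the fibre $\Ho^*(D\times_k\Glin)\to\Ho^*(D\lin)$ is surjective, since the restriction $\Ho^*(D\times_k\Glin)\to\Ho^*(D)$ is onto by Künneth and $\Ho^*(D)\twoheadrightarrow\Ho^*(D\lin)$ is onto by Proposition \ref{commutative}. These are exactly the hypotheses of the Leray--Hirsch principle, which transfers from the topological case via a Mayer--Vietoris argument because the bundle is Zariski-locally trivial (cf. \cite{husemoller1994fibre}), and it gives $\Ho^*(D\times_k\Glin)\cong\Ho^*(Q)\otimes\Ho^*(D\lin)$. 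On the other side, Künneth together with Proposition \ref{commutative} applied to $D$ gives $\Ho^*(D\times_k\Glin)\cong\Ho^*(\Glin)\otimes\Ho^*(D\lin)\otimes\Ho^*(D\ab)$. Equating the two yields $\Ho^*(Q)\otimes\Ho^*(D\lin)\cong\Ho^*(\Glin)\otimes\Ho^*(D\ab)\otimes\Ho^*(D\lin)$.

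Finally I would strip off the auxiliary factors using Lemma \ref{isogeny}. The isogeny $q:Q\to G$ gives $\Ho^*(Q)\cong\Ho^*(G)$, and the induced map $D\ab\to\Gab$ on abelian-variety quotients is again an isogeny (because $G=D\cdot\Glin$ and $D\lin$ has finite index in $D\cap\Glin$), so $\Ho^*(D\ab)\cong\Ho^*(\Gab)$. Substituting gives $\Ho^*(G)\otimes\Ho^*(D\lin)\cong\Ho^*(\Glin)\otimes\Ho^*(\Gab)\otimes\Ho^*(D\lin)$, and cancelling the nonzero graded factor $\Ho^*(D\lin)$ at the level of Poincaré series produces the desired $\Ho^*(G)\cong\Ho^*(\Glin)\otimes\Ho^*(\Gab)$, completing the argument by the reduction of the first paragraph. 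I expect the main difficulty to be precisely the middle step: setting up the anti-affine group $D$ with its central, generating and finite-index properties, and verifying that the Leray--Hirsch principle together with its surjectivity hypothesis really is available in this $l$-adic, Zariski-locally-trivial setting. Once that is in place, the isogeny replacements and the cancellation are purely formal.
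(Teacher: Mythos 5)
Your proposal is correct and follows essentially the same route as the paper: introduce the anti-affine subgroup $D=\Gant$, use the multiplication map $D\times_k\Glin\to G$ and the quotient by $D\lin$ to set up a Zariski-locally-trivial fibration with fibre $D\lin$, compute $\Ho^*(D\times_k\Glin)$ once by Leray--Hirsch and once by K\"unneth, replace $\Ho^*(Q)$ and $\Ho^*(D\ab)$ via isogenies, and cancel the factor $\Ho^*(D\lin)$. Your added remarks on centrality of $D$ and on reading off graded dimensions of primitives from Poincar\'e series only make explicit what the paper leaves implicit.
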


Now we will use the above to show that the desired isomorphism indeed exists.

\begin{theorem}\label{chevalley}
There is a section $s$ to $\iota^*:\Ho^*(G)\to \Ho^*(\Glin)$ such that \\$s\otimes \pi^*:\Ho^*(\Glin)\otimes \Ho^*(\Gab)\to \Ho^*(G)$ is an isomorphism of graded $\Q_l$-algebras.
\begin{proof}
We will show that $0\to \mathrm{P}\Ho^*(\Gab)\overset{\pi^*}{\to}\mathrm{P}\Ho^*(G)\overset{\iota^*}{\to} \mathrm{P}\Ho^*(\Glin)\to 0$ is exact, which is sufficient by Lemma~\ref{primitive}. Since $\dim \mathrm{P}\Ho^*(\Glin)+\dim \mathrm{P}\Ho^*(\Gab)=\dim \mathrm{P}\Ho^*(G)$ and since $\ker(\pi^*)\subset \mathrm{Im}(\iota^*)$ it will suffice to show that $\iota^*$ is surjective and $\pi^*$ is injective.\\
For this we look at (\ref{commuting diagram}) and we stick to the notation of Proposition \ref{dimension}. For $p_1:D\times_k\Glin\to D$ the first projection we have $\pi\circ m=q\ab\circ \pi_D\circ p_1$. This implies that $m^*\pi^*=p_1^*\pi_D^*q\ab^*$. We have that $m^*$ is injective by the Leray-Hirsch principle and $\pi_D^*$ is injective by Proposition \ref{commutative}. Moreover, $p_1^*$ and $q\ab^*$ are also injective and hence $\pi^*$ is injective.\\
To show that $\iota^*$ is surjective, pick $x\in \Ho^*(\Glin)$ and write $p_2:D\times_k\Glin\to \Glin$ for the second projection. Then the Leray Hirsch principle applied to $m':D\times_k\Glin\to Q$ together with the fact that $q\circ m'=m$ implies that $p_2^*x=\sum m^*\beta_n\cup \alpha_r^{\sigma_r}$ for $\beta_n\in \Ho^n(G)$ and the $\alpha_r^{\sigma_r}$ such that $(j')^*\alpha_r^{\sigma_r}$ generate $\Ho^r(D\lin)$ freely. Notice that the $\alpha_r^{\sigma_r}$ can be chosen of the form $p_1^*\beta_r^{\sigma_r}$ for $\beta_r^{\sigma_r}\in \Ho^r(D)$. Then note that by using that $\iota_2^*p_2^*$ is the identity, we have $p_2^*x=p_2^*\iota_2^*p_2^*x=\sum p_2^*\iota_2^*m^*\beta_n\cup p_2^*\iota_2^*p_1^*\beta_r^{\sigma_r}=p_2^*\iota_2^*m^*y$ for $y\in \Ho^*(G)$. So this gives $m\circ\iota_2=\iota$ and since $p_2^*$ is injective, this gives $x=\iota^*y$ and hence $\iota^*$ is surjective. We conclude that the sequence of primitive subspaces is exact and that hence the theorem is true.
\end{proof}
\end{theorem}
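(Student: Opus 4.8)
The plan is to apply Lemma \ref{primitive}, which reduces the assertion that $e\to \Glin\xrightarrow{\iota} G\xrightarrow{\pi}\Gab\to e$ has $(*)$ to the exactness of the sequence of primitive subspaces
\[
0\to \mathrm{P}\Ho^*(\Gab)\xrightarrow{\pi^*}\mathrm{P}\Ho^*(G)\xrightarrow{\iota^*}\mathrm{P}\Ho^*(\Glin)\to 0 .
\]
Because $\pi\circ\iota$ factors through the identity, we always have $\iota^*\circ\pi^*=0$, so $\mathrm{Im}(\pi^*)\subseteq\ker(\iota^*)$, and Proposition \ref{dimension} supplies the dimension identity $\dim\mathrm{P}\Ho^*(\Gab)+\dim\mathrm{P}\Ho^*(\Glin)=\dim\mathrm{P}\Ho^*(G)$. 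Hence it suffices to prove that $\pi^*$ is injective and that $\iota^*$ is surjective: exactness in the middle is then forced by counting dimensions. This is the reduction I would set up first.

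For the injectivity of $\pi^*$ I would reuse the anti-affine group $D=\Gant$, the multiplication map $m\colon D\times_k\Glin\to G$, and the diagram \eqref{commuting diagram} from Proposition \ref{dimension}. The point is that $m^*$ is injective, which is already available from the Leray--Hirsch argument used there. Writing $p_1\colon D\times_k\Glin\to D$ for the first projection and factoring $\pi\circ m=q\ab\circ\pi_D\circ p_1$ through the quotient $\pi_D\colon D\to D\ab$ and the induced isogeny $q\ab\colon D\ab\to\Gab$, I would read off $m^*\pi^*=p_1^*\,\pi_D^*\,q\ab^*$. Each factor on the right is injective — $p_1^*$ by Künneth, $\pi_D^*$ by Proposition \ref{commutative}, and $q\ab^*$ by Lemma \ref{isogeny} — so $m^*\pi^*$ is injective and therefore so is $\pi^*$.

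The surjectivity of $\iota^*$ is the step I expect to be the real obstacle. Here I would apply Leray--Hirsch once more, now to the Zariski-locally-trivial bundle $m'\colon D\times_k\Glin\to Q$, in order to express the pullback $p_2^*x$ of an arbitrary class $x\in\Ho^*(\Glin)$ along the second projection $p_2$ as a sum $\sum m^*\beta\cup\alpha$ in which the basis classes $\alpha$ are pulled back from $\Ho^*(D)$ via $p_1^*$. The delicate point is arranging the Leray--Hirsch basis so that it descends from $D$; granting this, I would then apply the splitting $\iota_2^*p_2^*=\mathrm{id}$ for the inclusion $\iota_2\colon\Glin\to D\times_k\Glin$ at the identity of $D$, which annihilates the positive-degree contributions coming from $D$ and collapses the sum to $p_2^*\iota_2^*m^*y=p_2^*\iota^* y$ for some $y\in\Ho^*(G)$, using $m\circ\iota_2=\iota$. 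Injectivity of $p_2^*$ then yields $x=\iota^* y$, giving surjectivity of $\iota^*$ and completing the argument. The bookkeeping of the cup products and the choice of the descending basis are where I would concentrate the care.
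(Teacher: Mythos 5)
Your proposal is correct and follows essentially the same route as the paper's proof: the same reduction to exactness of primitive subspaces via Lemma \ref{primitive} and the dimension count of Proposition \ref{dimension}, the same factorization $m^*\pi^*=p_1^*\pi_D^*q\ab^*$ for injectivity of $\pi^*$, and the same Leray--Hirsch argument on $m'\colon D\times_k\Glin\to Q$ with a basis pulled back from $D$ for surjectivity of $\iota^*$. The ``delicate point'' you flag is handled in the paper by the observation (already in Proposition \ref{dimension}) that $(j')^*p_1^*=(-\iota_D)^*$ is surjective, so the Leray--Hirsch basis can indeed be chosen of the form $p_1^*\beta_r^{\sigma_r}$.
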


We still have to show that for linear $G$ the sequence $e\to \Ru(G)\to G\to G\red\to e$ and for reductive $G$ the sequences $e\to R(G)\to G\to G\semi\to e$ give a tensor product decomposition of $l$-adic cohomology. The first one follows trivially as $\Ru(G)$ is isomorphic to an affine space and hence the Leray spectral sequence gives an isomorphism $\Ho^*(G\red)\to \Ho^*(G)$. For the second one we have the following lemma:

\begin{lemma}\label{isogenydiag}
Consider the commuting diagram of group varieties with exact rows and all columns isogenies:
\begin{equation*}
\begin{tikzcd}
e \arrow[r] & N_1 \arrow[r, "\iota_1"]                  & G_1 \arrow[r, "\pi_1"]                  & Q_1 \arrow[r]                  & e \\
e \arrow[r] & N_2 \arrow[r, "\iota_2"] \arrow[u, "q_N"] & G_2 \arrow[r, "\pi_2"] \arrow[u, "q_G"] & Q_2 \arrow[r] \arrow[u, "q_Q"] & e
\end{tikzcd}
\end{equation*}
Then the top row has $(*)$ if and only if the bottom row has $(*)$.
\begin{proof}
Upon taking primitive subspaces of $l$-adic cohomology, we get the following commuting diagram:
\begin{equation*}
\begin{tikzcd}
0 & \mathrm{P}\Ho^*(N_1) \arrow[l] \arrow[d, "q_N^*"]       & \mathrm{P}\Ho^*(G_1) \arrow[l] \arrow[d, "q_G^*"]  & \mathrm{P}\Ho^*(Q_1) \arrow[d, "q_Q^*"] \arrow[l]      & 0 \arrow[l] \\
0 & \mathrm{P}\Ho^*(N_2) \arrow[l] & \mathrm{P}\Ho^*(G_2)  \arrow[l] & \mathrm{P}\Ho^*(Q_2)  \arrow[l] & 0 \arrow[l]    
\end{tikzcd}
\end{equation*}
Since $q_N^*,q_G^*$ and $q_Q^*$ are isomorphisms, the top sequence is exact if and only if the bottom sequence is exact.
\end{proof}
\end{lemma}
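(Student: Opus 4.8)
The plan is to reduce everything to a short diagram chase on primitive subspaces, using the reformulation of $(*)$ from Lemma \ref{primitive} together with the fact from Lemma \ref{isogeny} that an isogeny induces an isomorphism on $l$-adic cohomology. Concretely, by Lemma \ref{primitive} the top row has $(*)$ if and only if the sequence $0\to \mathrm{P}\Ho^*(Q_1)\overset{\pi_1^*}{\to} \mathrm{P}\Ho^*(G_1)\overset{\iota_1^*}{\to} \mathrm{P}\Ho^*(N_1)\to 0$ is exact, and likewise the bottom row has $(*)$ if and only if $0\to \mathrm{P}\Ho^*(Q_2)\overset{\pi_2^*}{\to} \mathrm{P}\Ho^*(G_2)\overset{\iota_2^*}{\to} \mathrm{P}\Ho^*(N_2)\to 0$ is exact. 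Thus it suffices to show that the first of these sequences of graded $\Q_l$-vector spaces is exact precisely when the second is.

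To relate the two sequences, I would apply the contravariant functor $\Ho^*(-)$ to the given commuting diagram of group homomorphisms. This yields a commuting diagram of graded $\Q_l$-Hopf-algebras, and since pullbacks along group homomorphisms are Hopf-algebra maps they preserve the primitive subspaces; restricting to primitives therefore produces a commuting ladder whose two rows are exactly the sequences above and whose vertical maps are $q_N^*$, $q_G^*$ and $q_Q^*$. By Lemma \ref{isogeny} each of the isogenies $q_N$, $q_G$, $q_Q$ induces an isomorphism on $\Ho^*$, hence an isomorphism on the primitive subspaces. With all three vertical arrows isomorphisms, the top row of the ladder is exact if and only if the bottom row is, which combined with the previous paragraph gives the claim.

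Since the argument is formal once the ladder is set up, I do not expect a genuine obstacle; the one point deserving care is the verification that this square of pullback maps really commutes and that it restricts to the primitive subspaces. Both rest only on the functoriality of $l$-adic pullback and on the already-used observation that pullbacks send primitive elements to primitive elements, so granting these the result follows by a one-step diagram chase.
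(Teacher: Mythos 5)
Your proposal is correct and follows essentially the same route as the paper: pass to the ladder of primitive subspaces via Lemma \ref{primitive}, note the vertical pullbacks are isomorphisms by Lemma \ref{isogeny}, and conclude by a diagram chase. The only difference is that you spell out the reduction to primitives explicitly, which the paper leaves implicit.
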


This gives the following immediate corollary:

\begin{corollary}\label{reduct}
For reductive $G$, the sequence $e\to R(G)\to G\to G\semi\to e$ has $(*)$.
\begin{proof}
Since $G$ is reductive, $m:R(G)\times_k G\der\to G$ is an isogeny. Hence there is a commuting diagram: 
\begin{equation*}
\begin{tikzcd}
e \arrow[r] & R(G) \arrow[r]                             & G \arrow[r]                                        & G\semi \arrow[r]          & e \\
e \arrow[r] & R(G) \arrow[r, "\iota_1"] \arrow[u, "\Id"] & R(G)\times_k G\der \arrow[r, "p_2"] \arrow[u, "\mu"] & G\der \arrow[r] \arrow[u] & e
\end{tikzcd}
\end{equation*}
Now the previous lemma gives the result as the bottom sequence is split.
\end{proof}
\end{corollary}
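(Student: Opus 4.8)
The plan is to reduce the statement to the split case and then invoke Lemma \ref{isogenydiag}. For reductive $G$ the radical $\R(G)$ is a central torus and the derived subgroup $G\der$ is semisimple, and the structural input I would rely on is that the multiplication map $\mu:\R(G)\times_k G\der\to G$, $(r,g)\mapsto rg$, is an isogeny. It is surjective because $G=\R(G)\cdot G\der$, and its kernel $\{(r,r^{-1}):r\in\R(G)\cap G\der\}$ is finite because $\R(G)\cap G\der$ is finite. This is a standard fact from the structure theory of reductive groups.

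Next I would produce the isogeny needed on the quotient side. The composite $G\der\hookrightarrow G\overset{\pi}{\to}G\semi=G/\R(G)$ is surjective, again since $G=\R(G)\cdot G\der$, with finite kernel $G\der\cap\R(G)$, hence an isogeny. Together with the identity on $\R(G)$, these two isogenies assemble into the commuting ladder
\begin{equation*}
\begin{tikzcd}
e \arrow[r] & \R(G) \arrow[r] & G \arrow[r, "\pi"] & G\semi \arrow[r] & e \\
e \arrow[r] & \R(G) \arrow[r, "\iota_1"] \arrow[u, "\Id"] & \R(G)\times_k G\der \arrow[r, "p_2"] \arrow[u, "\mu"] & G\der \arrow[r] \arrow[u] & e
\end{tikzcd}
\end{equation*}
with exact rows and an isogeny in every column. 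The left square commutes because $\mu\circ\iota_1$ is the inclusion $\R(G)\hookrightarrow G$, and the right square because $\pi(\mu(r,g))=\pi(g)$, using that $r\in\R(G)=\ker\pi$.

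The bottom row is the split exact sequence of the honest direct product $\R(G)\times_k G\der$, with the projection $p_1$ retracting $\iota_1$. I would check directly that it has $(*)$: taking $s=p_1^*$ as a section to $\iota_1^*$ (valid since $p_1\circ\iota_1=\Id$), the map $s\otimes p_2^*$ is precisely the Künneth isomorphism $p_1^*\cup p_2^*:\Ho^*(\R(G))\otimes\Ho^*(G\der)\to\Ho^*(\R(G)\times_k G\der)$, an isomorphism of graded $\Q_l$-algebras. Since the bottom row then has $(*)$, Lemma \ref{isogenydiag} transfers $(*)$ to the top row, which is the assertion.

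The cohomological part of this argument is entirely formal once the ladder is in place; the only genuine obstacle is the structure-theoretic claim that $\mu$ is an isogeny, equivalently that $G$ is the almost-direct product of $\R(G)$ and $G\der$. As this is classical and citable, I expect no real difficulty, and the remaining work is simply verifying that the diagram commutes and that its columns are isogenies.
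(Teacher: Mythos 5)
Your proposal is correct and follows the paper's proof exactly: both use the isogeny $\mu:\R(G)\times_k G\der\to G$ to build the same commuting ladder over the split sequence $e\to \R(G)\to \R(G)\times_k G\der\to G\der\to e$ and then invoke Lemma \ref{isogenydiag}. Your write-up merely supplies more detail (verifying the columns are isogenies, the squares commute, and the split row has $(*)$ via Künneth) than the paper does.
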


This concludes step 1 of Strategy \ref{strategy}.

\subsection*{Step 2}
In this step we show that an exact sequence of abelian varieties, unipotent groups, tori or semisimple groups give a tensor product decomposition of $l$-adic cohomology. Note that the case of unipotent groups is trivial as they have no higher cohomology. For the other cases we have the following lemma.

\begin{lemma}\label{types}
An exact sequence $e\to N\to G\to Q\to e$ of abelian varieties (resp. semisimple group varieties, resp. tori) has $(*)$.
\begin{proof}
For any abelian variety $A$, the simple abelian subvarieties $A_1,...,A_n$ of $A$ satisfy that multiplication $A_1^{m_1}\times_k...\times_k A_n^{m_n}\to A$ is an isogeny by p.42 of \cite{milneAV}. So there exist abelian varieties $M$ and $H$ such that multiplication $M\times_k H\to G$ is an isogeny. The result for abelian varieties now follows from Lemma \ref{isogenydiag}. \\
For any semisimple group variety $G$ there also exist almost simple subgroup varieties $G_1,...,G_n$ such that multiplication $G_1\times_k...\times_k G_n\to G$ is an isogeny by Theorem 21.51 of \cite{milne_AG}, so the proof is the same as above.\\
For tori, the category of tori over an algebraically closed field is contravariantly equivalent to the category of torsionfree finitely generated abelian groups, the functor being $T\mapsto T^*:=\Hom(T,\G_m)$. So an exact sequence of tori $e\to N\to G\to e$ splits if and only if $0\to Q^*\to G^*\to N^*\to 0$ splits. This last exact sequence splits since all terms are free abelian groups. So the sequence of tori is split and hence it has $(*)$ by the Künneth formula.
\end{proof}
\end{lemma}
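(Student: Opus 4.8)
The plan is to handle all three cases by a single mechanism: reduce the given sequence, up to isogeny, to a split (product) sequence, and then transport $(*)$ across that isogeny by means of Lemma \ref{isogenydiag}. A split sequence $e \to N \to N \times_k P \to P \to e$ has $(*)$ immediately, since the Künneth isomorphism $\Ho^*(N) \otimes \Ho^*(P) \xrightarrow{\sim} \Ho^*(N \times_k P)$ already exhibits the required decomposition together with an evident section. So in each case I would first produce a complementary subgroup variety $P \subset G$ such that multiplication $m \colon N \times_k P \to G$ is an isogeny and the composite $P \hookrightarrow G \xrightarrow{\pi} Q$ is again an isogeny. These assemble into a commuting diagram with exact rows whose top row is the given sequence, whose bottom row is the split sequence $e \to N \to N \times_k P \to P \to e$, and whose vertical maps $\Id_N$, $m$, and $P \to Q$ are all isogenies; Lemma \ref{isogenydiag} then yields that the top row has $(*)$ because the bottom one does.

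For abelian varieties I would obtain $P$ from Poincaré's complete reducibility theorem: the abelian subvariety $N \subset G$ admits a complement $P \subset G$ with $N \cap P$ finite and $N + P = G$, so $m$ is an isogeny, and the dimension count $\dim P = \dim G - \dim N = \dim Q$ shows that $P \to Q$ has finite kernel and hence is an isogeny. For semisimple group varieties I would instead use the expression of $G$ as an almost direct product of its almost simple factors: the connected normal subgroup $N$ is the almost direct product of a subfamily of those factors, and the product of the remaining factors furnishes a complement $P$ to which the same diagram argument applies.

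For tori the argument is more elementary and sidesteps isogenies entirely. Under the contravariant equivalence $T \mapsto \Hom(T, \G_m)$ between tori over $k$ and finitely generated free abelian groups, the given sequence corresponds to a short exact sequence $0 \to \Hom(Q, \G_m) \to \Hom(G, \G_m) \to \Hom(N, \G_m) \to 0$ of free abelian groups, which splits because $\Hom(N, \G_m)$ is free and hence projective. Thus the sequence of tori is already split, $G \cong N \times_k Q$, and $(*)$ follows directly from Künneth.

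The main obstacle I anticipate is the construction of the complement $P$ in the abelian and semisimple cases and the confirmation that both $m$ and $P \to Q$ are genuine isogenies; this is precisely where the structural input (Poincaré reducibility, respectively the almost-simple-factor decomposition) does the real work. Once the isogeny diagram is in place, Lemma \ref{isogenydiag} together with the Künneth formula finishes each case essentially formally.
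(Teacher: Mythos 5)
Your proposal is correct and follows essentially the same route as the paper: in each of the abelian and semisimple cases one produces a complement so that multiplication from a product is an isogeny onto $G$, compares the given sequence to the split one via Lemma \ref{isogenydiag}, and handles tori by splitting the sequence of character groups and applying K\"unneth. If anything, your version is slightly more explicit than the paper's in checking that the complement $P$ maps to $Q$ by an isogeny, which is needed to make the comparison diagram legitimate.
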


This finishes step 2 of Strategy \ref{strategy}.

\subsection*{Step 3}

In this step we finish the proof of Theorem \ref{result}. Recall the notion of an almost exact sequence in Definition \ref{almostE}.

\begin{example} Notable examples of almost exact sequences are the following.
\begin{itemize}
\item When $e\to N\to G\to Q\to e$ is an exact sequence of group varieties, the induced sequences $e\to N\lin\to G\lin\to Q\lin\to e$ and $e\to N\ab\to G\ab\to Q\ab\to e$ are almost exact. Checking this is routine and relies on the fact that $N\lin$ has finite index in $N\cap \Glin=\ker(G\lin\to Q\lin)$.
\item When $e\to N\to G\to Q\to e$ is exact and all group varieties are linear, the induced sequences $e\to \Ru(N)\to \Ru(G)\to \Ru(Q)\to e$ and $e\to N\red\to G\red\to Q\red\to e$ are almost exact. Like for the previous sequence, it relies on $\Ru(N)$ having finite index in $N\cap \Ru(G)$.
\item When all group varieties in question are reductive, the sequences\\ $e\to \R(N)\to \R(G)\to \R(Q)\to e$ and $e\to N\semi\to G\semi\to Q\semi\to e$ are almost exact. The same argument as for the other two shows that this is true.
\end{itemize}
\end{example}

We now show that certain almost exact sequences have $(*)$.

\begin{lemma}\label{almost}
For $e\to N\to G\to Q\to e$ an exact sequence of group varieties, the almost exact sequence $e\to N\ab\to G\ab\to Q\ab\to e$ has $(*)$. When all group varieties are linear, the almost exact sequence $\Ru(N)\to \Ru(G)\to \Ru(Q)\to e$ has $(*)$ and when all group varieties are reductive, the almost exact sequences $e\to \R(N)\to \R(G)\to \R(Q)\to e$ and $e\to N\semi\to G\semi\to Q\semi\to e$ have $(*)$.
\begin{proof}
Consider the exact sequence $e\to \frac{N}{N\cap \Glin}\to G\ab \to Q\ab\to e$. It has $(*)$ by Lemma \ref{types}. Note that $N\ab \to \frac{N}{N\cap \Glin}$ is an isogeny and so it induces an isomorphism on cohomology. Hence it follows that $e\to N\ab\to G\ab\to Q\ab\to e$ has $(*)$.\\
In the case of all group varieties being linear, the statement about the cohomology of unipotent groups is trivial since unipotent groups have no higher cohomology.\\
When all group varieties are reductive consider $e\to \R(N)\to \R(G)\to \frac{\R(G)}{\R(N)}\to e$. Since all radicals are tori, the sequence has $(*)$ by Lemma \ref{types}. Since $\frac{R(G)}{R(N)}\to R(Q)$ is an isogeny, whose pullback is an isomorphism, it follows that $e\to \R(N)\to \R(G)\to \R(Q)\to e$ has $(*)$.\\
For the sequence $e\to N\semi\to G\semi\to Q\semi\to e$, note that $e\to \frac{N}{N\cap \R(G)}\to G\semi\to Q\semi\to e$ is exact and that $N\semi\to \frac{N}{N\cap \R(G)}$ is an isogeny, so this follows as in the case of the abelian varieties.
\end{proof}
\end{lemma}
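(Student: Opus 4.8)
The plan is to handle all four almost exact sequences by the same two-step recipe: first replace the almost exact sequence by a genuine \emph{exact} sequence of the appropriate special type (abelian varieties, tori, or semisimple group varieties), for which Lemma~\ref{types} already supplies $(*)$; then transport $(*)$ back along the comparison isogeny built into the definition of almost exactness, using that isogenies induce isomorphisms on $l$-adic cohomology (Lemma~\ref{isogeny}).

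Concretely, for the sequence $e\to N\ab\to G\ab\to Q\ab\to e$ I would first identify $\ker(G\ab\to Q\ab)$ with the image of $N$ in $G\ab$, namely $N/(N\cap\Glin)$, giving an honest short exact sequence $e\to N/(N\cap\Glin)\to G\ab\to Q\ab\to e$ of abelian varieties. This has $(*)$ by Lemma~\ref{types}. The canonical surjection $N\ab = N/N\lin \to N/(N\cap\Glin)$ has finite kernel $(N\cap\Glin)/N\lin$ --- this is exactly the finite-index fact recorded in the examples --- so it is an isogeny and the induced $q^*$ is an isomorphism on cohomology. Since the inclusion $\iota\colon N\ab\to G\ab$ factors as this isogeny followed by $N/(N\cap\Glin)\hookrightarrow G\ab$, a section $s'$ to the latter's pullback composes with $(q^*)^{-1}$ to give a section $s$ to $\iota^*$, and $s\otimes\pi^*$ differs from the isomorphism for the honest sequence by the isomorphism $(q^*)^{-1}\otimes\Id$; hence it is again an isomorphism and the almost exact sequence has $(*)$.

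The remaining cases run along identical lines with the special type and the location of the isogeny adjusted. For linear $N,G,Q$ the unipotent sequence $\Ru(N)\to\Ru(G)\to\Ru(Q)\to e$ is immediate, since unipotent group varieties have trivial higher cohomology, so all three cohomology rings are $\Q_l$ concentrated in degree $0$ and $(*)$ is vacuous. For reductive $N,G,Q$ the radical sequence is compared to the honest exact sequence of tori $e\to\R(N)\to\R(G)\to\R(G)/\R(N)\to e$ (which has $(*)$ by Lemma~\ref{types}), where now the comparison isogeny $\R(G)/\R(N)\to\R(Q)$ sits on the quotient side; and the semisimple sequence is compared to $e\to N/(N\cap\R(G))\to G\semi\to Q\semi\to e$, with the isogeny $N\semi\to N/(N\cap\R(G))$ on the sub side. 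In each case Lemma~\ref{isogeny} lets me transport the section and the tensor decomposition exactly as above.

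The main obstacle I anticipate is purely the bookkeeping of the first step in each case: correctly identifying the kernel of the induced map (e.g.\ $\ker(G\ab\to Q\ab)=N/(N\cap\Glin)$, and the analogous kernels for $\R$ and $(-)\semi$) so that the replacement sequence is genuinely exact, and verifying that the resulting comparison map is an isogeny --- i.e.\ that $N\lin$ has finite index in $N\cap\Glin$, $\R(N)$ the appropriate index in $N\cap\R(G)$, and so on. Once these finite-index and exactness verifications are in place, the cohomological transport is formal. Rather than invoke Lemma~\ref{isogenydiag} verbatim (its hypotheses ask for exact rows, whereas one of our rows is only almost exact), I would spell out the one-line argument that precomposing $\iota$, or postcomposing $\pi$, with an isogeny preserves $(*)$, since the induced map on cohomology is an isomorphism and Definition~\ref{star} is stated precisely in terms of such a section and tensor isomorphism.
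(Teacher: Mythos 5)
Your proposal is correct and follows essentially the same route as the paper: in each case you replace the almost exact sequence by the same honest exact sequence of the relevant special type ($e\to N/(N\cap\Glin)\to G\ab\to Q\ab\to e$, $e\to\R(N)\to\R(G)\to\R(G)/\R(N)\to e$, $e\to N/(N\cap\R(G))\to G\semi\to Q\semi\to e$), invoke Lemma~\ref{types}, and transport $(*)$ across the same comparison isogenies. Your extra care in spelling out how the section is transported, and your observation that Lemma~\ref{isogenydiag} cannot be cited verbatim since one row is only almost exact, are sound refinements of what the paper leaves implicit.
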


Now we can combine this lemma with several parts of Step 1 to obtain the following.

\begin{lemma}\label{reductive}
An exact sequence $e\to N\to G\to Q\to e$ of reductive group varieties has $(*)$. An induced sequence $e\to N\red\to G\red\to Q\red$ has $(*)$.
\begin{proof}
Let $e\to N\to G\to Q\to e$ be an exact sequence of reductive group varieties. By considering the induced sequences of radicals and semisimple groups and then passing to $l$-adic cohomology, one gets the following commuting diagram:
\begin{equation*}
\begin{tikzcd}
  & 0                                   & 0                                   & 0                                   &             \\
0 & \mathrm{P}\Ho^*(\R(N)) \arrow[l] \arrow[u]   & \mathrm{P}\Ho^*(R(G)) \arrow[l] \arrow[u]   & \mathrm{P}\Ho^*(R(Q)) \arrow[l] \arrow[u]   & 0 \arrow[l] \\
0 & \mathrm{P}\Ho^*(N) \arrow[u] \arrow[l]      & \mathrm{P}\Ho^*(G) \arrow[l] \arrow[u]      & \mathrm{P}\Ho^*(Q) \arrow[l] \arrow[u]      & 0 \arrow[l] \\
0 & \mathrm{P}\Ho^*(N\semi) \arrow[u] \arrow[l] & \mathrm{P}\Ho^*(G\semi) \arrow[u] \arrow[l] & \mathrm{P}\Ho^*(Q\semi) \arrow[u] \arrow[l] & 0 \arrow[l] \\
  & 0 \arrow[u]                         & 0 \arrow[u]                         & 0 \arrow[u]                         &            
\end{tikzcd}
\end{equation*}
We have to show that the middle row is exact. The top and bottom row are exact by Lemma \ref{almost}. All columns are exact by Corollary \ref{reduct}. By applying the Snake lemma twice, we see that $\mathrm{P}\Ho^*(Q)\to \mathrm{P}\Ho^*(G)$ is injective and that $\mathrm{P}\Ho^*(G)\to \mathrm{P}\Ho^*(N)$ is surjective. Since $N\to G\to Q$ is constant, $\mathrm{P}\Ho^*(Q)$ is included in $\ker(\mathrm{P}\Ho^*(G)\to \mathrm{P}\Ho^*(N))$. A simple diagram-chase then yields that this inclusion is in fact an equality.
\end{proof}
\end{lemma}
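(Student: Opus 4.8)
The plan is to reduce $(*)$ to the exactness of the associated sequence of primitive subspaces via Lemma~\ref{primitive}, and then to obtain that exactness from the radical--semisimple decomposition of $N$, $G$ and $Q$ by a purely homological argument. For each reductive group variety $X\in\{N,G,Q\}$ the sequence $e\to \R(X)\to X\to X\semi\to e$ has $(*)$ by Corollary~\ref{reduct}, so by Lemma~\ref{primitive} the column
$$0\to \mathrm{P}\Ho^*(X\semi)\to \mathrm{P}\Ho^*(X)\to \mathrm{P}\Ho^*(\R(X))\to 0$$
is short exact. Passing to radicals and to semisimple quotients of the original sequence yields the almost exact sequences $e\to \R(N)\to \R(G)\to \R(Q)\to e$ and $e\to N\semi\to G\semi\to Q\semi\to e$, both of which have $(*)$ by Lemma~\ref{almost}. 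Assembling these produces a commuting $3\times 3$ array of primitive subspaces whose three columns and whose top (radical) and bottom (semisimple) rows are all short exact.

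First I would check that the middle row is genuinely a complex: because $N=\ker\pi$, the composite $\pi\circ\iota\colon N\to Q$ factors through $e$, whence $\iota^*\circ\pi^*=(\pi\circ\iota)^*=0$ and $\Ima(\pi^*)\subseteq\ker(\iota^*)$ in $\mathrm{P}\Ho^*(G)$. The three columns then constitute a short exact sequence of three-term complexes
$$0\to(\text{semisimple row})\to(\text{middle row})\to(\text{radical row})\to 0,$$
in which the outer two complexes are acyclic. The long exact homology sequence (equivalently, two applications of the snake lemma) forces the middle complex to be acyclic as well, so $\pi^*$ is injective, $\iota^*$ is surjective, and $\ker(\iota^*)=\Ima(\pi^*)$. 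By Lemma~\ref{primitive} this is exactly the statement that $e\to N\to G\to Q\to e$ has $(*)$.

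For the second assertion, concerning an induced sequence $e\to N\red\to G\red\to Q\red\to e$ of reductive quotients of an exact sequence of linear group varieties, I would mirror the isogeny device used in Lemma~\ref{almost}. This induced sequence is almost exact, and the genuine exact sequence to which it is related is an exact sequence of reductive group varieties; it therefore has $(*)$ by the first part. Since the relevant comparison map is an isogeny, it induces an isomorphism on $l$-adic cohomology by Lemma~\ref{isogeny}, hence on primitive subspaces, and $(*)$ transfers to the induced sequence.

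I expect the delicate point to lie not in the diagram chase but in confirming the hypotheses that feed into it: that the outer rows and all columns are short exact (which is where Corollary~\ref{reduct} and Lemma~\ref{almost} enter) and that the middle row is a complex, so that the nine-lemma conclusion is legitimate. Granting these, the homological step---deducing acyclicity of the middle complex from that of the other two---is routine, the category of graded $\Q_l$-vector spaces being abelian.
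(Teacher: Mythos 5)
Your proposal is correct and follows essentially the same route as the paper: the same $3\times 3$ diagram of primitive subspaces with columns exact by Corollary~\ref{reduct} and outer rows exact by Lemma~\ref{almost}, concluded by the nine-lemma/snake-lemma argument (and you rightly flag that the middle row must first be checked to be a complex, which the paper handles via the observation that $N\to G\to Q$ is constant). Your explicit treatment of the second assertion via the isogeny comparison is a welcome addition, as the paper's proof leaves that part implicit.
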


We now use this lemma to show that Theorem \ref{result} holds in the case that all group varieties are linear.

\begin{lemma}\label{linear}
An exact sequence $e\to N\to G\to Q\to e$ of linear group varieties has $(*)$. Moreover for $e\to N\to G\to Q\to e$ exact, the almost exact sequence $e\to N\lin\to G\lin \to Q\lin\to e$ has $(*)$.
\begin{proof}
The proof follows similarly to the above proof by considering the following commuting diagram, which has exact columns and an exact bottom row by Lemma \ref{reductive}:
\begin{equation*}
\begin{tikzcd}
 & 0  & 0   & 0      \\
0 & \mathrm{P}\Ho^*(N) \arrow[u] \arrow[l]      & \mathrm{P}\Ho^*(G) \arrow[l] \arrow[u]      & \mathrm{P}\Ho^*(Q) \arrow[l] \arrow[u]      & 0 \arrow[l] \\
0 & \mathrm{P}\Ho^*(N\red) \arrow[u] \arrow[l] & \mathrm{P}\Ho^*(G\red) \arrow[u] \arrow[l] & \mathrm{P}\Ho^*(Q\red) \arrow[u] \arrow[l] & 0 \arrow[l] \\
  & 0 \arrow[u]                         & 0 \arrow[u]                         & 0 \arrow[u]                         &            
\end{tikzcd}
\end{equation*}
For the second statement, note that $e\to N\lin \to G\lin\to \frac{G\lin}{N\lin}\to e$ has $(*)$ since all group varieties are linear and that $\frac{G\lin}{N\lin}\to Q\lin$ is an isogeny.
\end{proof}
\end{lemma}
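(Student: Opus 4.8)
The plan is to reduce, via Lemma~\ref{primitive}, both assertions to the exactness of the associated sequences of primitive subspaces, and then to run the same diagram-chase used in Lemma~\ref{reductive}, only now decomposing a linear group through its unipotent radical rather than decomposing a reductive group through its radical. So the whole argument is meant to be parallel to the reductive case, with Lemma~\ref{reductive} playing the role that Corollary~\ref{reduct} played there.

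For the first assertion I would start from the exact sequence $e\to N\to G\to Q\to e$ of linear group varieties and assemble the $3\times 3$ commuting diagram whose columns are the sequences $e\to \Ru(-)\to (-)\to (-)\red\to e$ for $N$, $G$ and $Q$, whose middle row is the given sequence, and whose bottom row is the induced sequence of reductive quotients $e\to N\red\to G\red\to Q\red\to e$. Passing to primitive subspaces of $l$-adic cohomology, the columns are exact because each sequence $e\to \Ru(-)\to (-)\to (-)\red\to e$ has $(*)$ (Step~1); in fact, since $\Ru(-)$ is an affine space, $\mathrm{P}\Ho^*(\Ru(-))=0$, so the vertical maps $\mathrm{P}\Ho^*((-)\red)\to \mathrm{P}\Ho^*(-)$ are isomorphisms. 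The bottom row is exact by Lemma~\ref{reductive}. Transporting exactness along these vertical isomorphisms then forces the middle row $0\to \mathrm{P}\Ho^*(Q)\to \mathrm{P}\Ho^*(G)\to \mathrm{P}\Ho^*(N)\to 0$ to be exact, which gives $(*)$ by Lemma~\ref{primitive}.

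For the second assertion, concerning the almost exact sequence $e\to N\lin\to G\lin\to Q\lin\to e$ attached to an arbitrary exact sequence $e\to N\to G\to Q\to e$, I would first replace $Q\lin$ by the honest quotient $G\lin/N\lin$. The terms $N\lin$, $G\lin$ and $G\lin/N\lin$ are all linear, so the exact sequence $e\to N\lin\to G\lin\to G\lin/N\lin\to e$ has $(*)$ by the first part. The almost-exact structure supplies an isogeny $G\lin/N\lin\to Q\lin$, coming from the finite-index inclusion of $N\lin$ in $N\cap \Glin=\ker(G\lin\to Q\lin)$, whose pullback on cohomology is an isomorphism by Lemma~\ref{isogeny}. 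Comparing the two sequences through this isogeny, either directly or via Lemma~\ref{isogenydiag}, then yields $(*)$ for $e\to N\lin\to G\lin\to Q\lin\to e$.

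The genuinely new content, and hence the part I expect to require the most care, is arranging the first diagram so that the hypotheses of the chase are all legitimately in place: I must check that the induced bottom row of reductive quotients is exactly the almost exact sequence already handled in Lemma~\ref{reductive}, and that the unipotent columns contribute nothing to primitive cohomology. Once these inputs are secured the argument is formally identical to Lemma~\ref{reductive}, and the second assertion becomes a short isogeny reduction; the only subtlety there is ensuring that the isogeny $G\lin/N\lin\to Q\lin$ really exists, which is precisely the finite-index fact recorded among the examples of almost exact sequences.
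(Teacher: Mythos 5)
Your proposal matches the paper's proof essentially verbatim: the same $3\times 3$ diagram of primitive subspaces with columns coming from $e\to \Ru(-)\to(-)\to(-)\red\to e$ (whose vertical maps are isomorphisms since $\Ru(-)$ is an affine space), the bottom row exact by Lemma \ref{reductive}, and the second assertion reduced to the first via the isogeny $G\lin/N\lin\to Q\lin$ and Lemma \ref{isogeny}. No discrepancies to report.
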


Now we can prove the main result of the paper, which is Theorem \ref{result}.

\begin{theorem*}
An exact sequence of group varieties $e\to N\to G\to Q\to e$ has $(*)$.
\begin{proof}
The proof is almost identical to the proof of Lemma \ref{reductive}. Consider the large commuting diagram:
\begin{equation*}
\begin{tikzcd}
  & 0                                   & 0                                   & 0                                   &             \\
0 & \mathrm{P}\Ho^*(N\lin) \arrow[l] \arrow[u]   & \mathrm{P}\Ho^*(G\lin) \arrow[l] \arrow[u]   & \mathrm{P}\Ho^*(Q\lin) \arrow[l] \arrow[u]   & 0 \arrow[l] \\
0 & \mathrm{P}\Ho^*(N) \arrow[u] \arrow[l]      & \mathrm{P}\Ho^*(G) \arrow[l] \arrow[u]      & \mathrm{P}\Ho^*(Q) \arrow[l] \arrow[u]      & 0 \arrow[l] \\
0 & \mathrm{P}\Ho^*(N\ab) \arrow[u] \arrow[l] & \mathrm{P}\Ho^*(G\ab) \arrow[u] \arrow[l] & \mathrm{P}\Ho^*(Q\ab) \arrow[u] \arrow[l] & 0 \arrow[l] \\
  & 0 \arrow[u]                         & 0 \arrow[u]                         & 0 \arrow[u]                         &            
\end{tikzcd}
\end{equation*}
Again it suffices to show that the middle row is exact. The columns are exact by Theorem \ref{chevalley} of Step 1. The top and bottom rows are exact by Lemma \ref{linear} and Lemma \ref{almost} respectively. Thus we can apply the snake lemma twice and then note that $\mathrm{P}\Ho^*(Q)\subset \ker (\mathrm{P}\Ho^*(G)\to \mathrm{P}\Ho^*(N))$ to obtain that the middle row is exact, which proves the theorem.
\end{proof}
\end{theorem*}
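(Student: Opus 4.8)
The plan is to reduce the statement to the exactness of a single sequence of primitive subspaces and then extract that exactness from a $3\times 3$ diagram assembled out of the linear and abelian cases already treated. By Lemma \ref{primitive} the sequence $e\to N\to G\to Q\to e$ has $(*)$ if and only if
\[
0\to \mathrm{P}\Ho^*(Q)\overset{\pi^*}{\to}\mathrm{P}\Ho^*(G)\overset{\iota^*}{\to}\mathrm{P}\Ho^*(N)\to 0
\]
is exact, so this is the only thing to check. First I would recognise this sequence as the middle row of a grid whose columns are the primitive Chevalley sequences of $N$, $G$ and $Q$, and whose outer rows record the linear and abelian parts.

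Concretely, I would assemble the commutative diagram whose columns are the sequences of primitive subspaces coming from $e\to \Glin\to G\to \Gab\to e$ and its analogues for $N$ and $Q$; these columns are exact by Theorem \ref{chevalley}. The top row is then the primitive sequence of the almost exact sequence $e\to N\lin\to G\lin\to Q\lin\to e$, which is exact by Lemma \ref{linear}, and the bottom row is that of $e\to N\ab\to G\ab\to Q\ab\to e$, which is exact by Lemma \ref{almost}. With exact columns and exact top and bottom rows in hand, applying the snake lemma twice yields that $\pi^*\colon\mathrm{P}\Ho^*(Q)\to\mathrm{P}\Ho^*(G)$ is injective and that $\iota^*\colon\mathrm{P}\Ho^*(G)\to\mathrm{P}\Ho^*(N)$ is surjective, exactly as in the proof of Lemma \ref{reductive}.

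The only remaining point is exactness in the middle, i.e.\ that $\ker(\iota^*)=\mathrm{Im}(\pi^*)$. The inclusion $\mathrm{Im}(\pi^*)\subseteq\ker(\iota^*)$ is free: since $\pi\circ\iota$ factors through the identity section $e$, its pullback kills all positive-degree classes, so $\iota^*\circ\pi^*=0$. For the reverse inclusion I would run the same short diagram chase as in Lemma \ref{reductive}, lifting a primitive class in $\ker(\iota^*)$ through the exact rows and columns and using commutativity to produce a preimage under $\pi^*$. I do not expect any single homological step to be hard; the real work is bookkeeping, namely confirming that the three sequences fed into the grid are precisely the (almost) exact sequences covered by Theorem \ref{chevalley}, Lemma \ref{linear} and Lemma \ref{almost}, so that the rows and columns are genuinely exact. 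Once the grid is correctly populated, the snake lemma together with the vanishing of $\iota^*\circ\pi^*$ forces the middle row to be exact, which is what $(*)$ demands.
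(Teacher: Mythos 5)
Your proposal is correct and follows essentially the same route as the paper: the same $3\times 3$ grid of primitive subspaces with columns exact by Theorem \ref{chevalley} and outer rows exact by Lemma \ref{linear} and Lemma \ref{almost}, followed by two applications of the snake lemma and the observation that $\mathrm{Im}(\pi^*)\subseteq\ker(\iota^*)$ to finish with a diagram chase. No substantive differences to report.
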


By using the decomposition of group varieties that we have seen throughout this paper, together with the fact that $l$-adic cohomology is isogeny-invariant, we obtian the following result as a corollary.

\begin{corollary}
For $G$ a group variety, to compute $\Ho^*(G)$, it suffices to compute $\Ho^*(H)$, where $H$ is one of: $\G_m$, $\G_a$, a simple abelian variety, a semi-simple simply connected group variety.
\end{corollary}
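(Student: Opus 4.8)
The plan is to dismantle $G$ using the structure theory recorded above, splitting off a tensor factor via $(*)$ at each extension and invoking the isogeny invariance of Lemma \ref{isogeny} to descend to the four stated building blocks. I begin by applying Theorem \ref{chevalley} to the Chevalley sequence $e\to \Glin\to G\to \Gab\to e$, which gives $\Ho^*(G)\cong \Ho^*(\Glin)\otimes \Ho^*(\Gab)$; it therefore suffices to treat the linear factor $L:=\Glin$ and the abelian factor $\Gab$ separately.

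For the abelian factor, recall from the proof of Lemma \ref{types} that multiplication is an isogeny $A_1^{m_1}\times_k\cdots\times_k A_n^{m_n}\to \Gab$ from a product of powers of simple abelian varieties $A_i$. By Lemma \ref{isogeny} this is an isomorphism on cohomology, and Künneth then yields $\Ho^*(\Gab)\cong\bigotimes_{i=1}^n \Ho^*(A_i)^{\otimes m_i}$, so the abelian factor is determined by the cohomology of simple abelian varieties.

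For the linear factor I first pass to the reductive quotient $M:=L\red$. Since $\Ru(L)$ is isomorphic to an affine space, the Leray spectral sequence for $L\to M$ gives $\Ho^*(M)\xrightarrow{\sim}\Ho^*(L)$, exactly as in Step 1; thus the unipotent radical contributes nothing beyond $\Ho^*(\G_a)=\Q_l$. Corollary \ref{reduct} applied to $e\to \R(M)\to M\to M\semi\to e$ gives $\Ho^*(M)\cong \Ho^*(\R(M))\otimes \Ho^*(M\semi)$. The radical $\R(M)$ is a torus, hence isomorphic to $\G_m^r$, and Künneth reduces it to $\G_m$; the semisimple quotient $M\semi$ receives an isogeny from its simply connected cover, which is a semisimple simply connected group variety inducing an isomorphism on cohomology by Lemma \ref{isogeny}.

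Chaining these isomorphisms presents $\Ho^*(G)$ as a tensor product of the cohomology rings of groups of the four listed types, which proves the corollary. I do not anticipate a genuine obstacle: each reduction is a direct application of Theorem \ref{chevalley}, Corollary \ref{reduct}, or Lemma \ref{isogeny}. The only things requiring attention are the bookkeeping needed to compose the successive tensor decompositions correctly and the observation that $\G_a$ enters solely through the cohomologically trivial unipotent radical, so that including it among the building blocks is harmless.
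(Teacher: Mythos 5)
Your proposal is correct and is exactly the argument the paper intends: the paper gives no separate proof of the corollary, only the remark that it follows from the decompositions established throughout (the Chevalley sequence via Theorem \ref{chevalley}, the unipotent radical and radical via Step 1, the reduction of abelian varieties and semisimple groups to simple/simply connected ones via isogenies) together with the isogeny invariance of Lemma \ref{isogeny}. Your write-up simply makes that chain of reductions explicit, so there is nothing to add.
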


\begin{remark}
For $H$ an abelian variety, the cohomology is naturally isomorphic to the exterior algebra on $\Ho^1(H)$, which is the tate module of the $l^n$-torsion of the dual $H^\vee$. In particular $\Ho^*(H)$ depends (as a $\Q_l$-algebra) only on $\dim(H)$. \\
For $H$ a semisimple and simply connected group variety, $H$ is either in an isomorphism class $A_{n-1}$, $B_n$, $C_n$ or $D_n$ or $H$ is one of the exceptional groups $G_2,F_4,E_6,E_7$ or $E_8$. For $T$ a maximal torus of $H$, set $S=S=\Sym(X^*(T)\otimes_\Z \Q_l))$ and denote by $W$ the Weyl group of $H$ (see \cite{springer2008linear} Section 7.1 for an introduction on the Weyl group). The cohomology ring $\Ho^*(H)$ is given by $\bigwedge^* J[\times 2-1]$ by (\cite{sga41/2} p.230), where for $S^W$ the Weyl-group invariants of $S$, $J=S^W_+/(S^W_+)^2$, where $S^W_+\subset S^W$ is the ideal of invariants of positive degree and $[\times 2-1]$ indicates that the grading of an element is doubled and then decreased by $1$.\\
Computing these invariants is a nontrivial task in the exceptional cases and it was done for $G_2$ in \cite{G2} and for $F_4,E_6,E_7,E_8$ in \cite{F4E6E7E8}.The algebra structure of $\Ho^*(H)$ is determined by the degrees of these invariants, which are listed in (\cite{freudenthal1969linear}, p.516).

\end{remark}

We remark that the following generalization of Theorem \ref{result} can not be made.\\

\begin{remark}
One might wonder if the condition that $N\subset G$ is normal can be relaxed to $N=H$ being any subgroup variety of $G$. In this case the quotient $G/H$ exists as a variety over $k$ and one can ask whether one has a similar isomorphism $\Ho^*(G)\cong \Ho^*(H)\otimes \Ho^*(G/H)$.\\
This fails already in some very simple cases. For example, we let $n\geq 2$ and take $G=\GL_n$ and the torus $H=\{\diag(t_1,...,t_n)\,|\,t_1,...,t_n\in k^*\}\cong \G_m^n$. We have by (\cite{sga41/2} p.230) an isomorphism of graded $\Q_l$-algebras $\Ho^*(\GL_n)\cong \bigwedge^*\J[\times 2-1]$ , where $\J$ is the $\Q_l$-vector space spanned freely by the primitive Weyl-group invariants of $\GL_n$. In this case the primitive invariants are $e_1,...,e_n$, the elementary symmetric polynomials and $[\times 2-1]$ indicates that the weight of $e_i$ is $2i-1$. Hence $\dim \Ho^1(\GL_n)=1$, while $\dim \Ho^1(\G_m^n)=n$ by the Künneth formula, so indeed the above generalization does not hold.
\end{remark}

We end with the following question for possible further research.

\begin{question*}
The proof of Theorem \ref{result} relies on various structure theorems for algebraic groups and being able to compare certain exact sequences to split exact sequences. One may ask if there is a more intrinsic proof than ours. For example, can one show directly that the $l$-adic Leray spectral sequence of $G\to Q$ has $E_2^{p,q}=\Ho^p(Q,\Q_l)\otimes_{\Q_l}\Ho^q(N,\Q_l)$ and that it degenerates at the $E_2$-page?
\end{question*}

\subsection*{Acknowledgements}
The author wants to thank prof. Gunther Cornelissen for his supervision, enthusiasm and helpful discussions during the research period. The author also wants to thank the anonymous referee for giving helpful comments during the submission process.

\printbibliography

@book{milne_AG, place={Cambridge}, series={Cambridge Studies in Advanced Mathematics}, title={Algebraic Groups: The Theory of Group Schemes of Finite Type over a Field}, publisher={Cambridge University Press}, author={Milne, James S.}, edition={1st}, year={2017}, collection={Cambridge Studies in Advanced Mathematics}}

@article{Arima,
author = {Satoshi Arima},
title = {{Commutative group varieties.}},
volume = {12},
journal = {Journal of the Mathematical Society of Japan},
number = {3},
publisher = {Mathematical Society of Japan},
pages = {227 -- 237},
year = {1960}
}

@book{Serre,
  title={Algebraic Groups and Class Fields},
  author={Serre, Jean Pierre},
    edition={1st (English translation)},
  series={Graduate Texts in Mathematics},
  year={2012},
    volume={117},
  publisher={Springer New York}
}

@book{husemoller1994fibre,
  title={Fibre Bundles},
  author={Husemoller, Dale},
  lccn={93004694},
  series={Graduate Texts in Mathematics},
    volume={20},
  year={1994},
  publisher={Springer}
}

@misc{milneAV,
author={Milne, James S.},
title={Abelian Varieties (v2.00)},
year={2008},
url={https://www.jmilne.org/math/CourseNotes/av.html},
pages={166+vi}
}

@Inbook{Cartier2007,
author="Cartier, Pierre",
editor="Cartier, Pierre
and Moussa, Pierre
and Julia, Bernard
and Vanhove, Pierre",
title="A Primer of Hopf Algebras",
bookTitle="Frontiers in Number Theory, Physics, and Geometry II: On Conformal Field Theories, Discrete Groups and Renormalization",
year="2007",
publisher="Springer Berlin Heidelberg",
address="Berlin, Heidelberg",
pages="537--615",
abstract="In this paper, we review a number of basic results about so-called Hopf algebras. We begin by giving a historical account of the results obtained in the 1930's and 1940's about the topology of Lie groups and compact symmetric spaces. The climax is provided by the structure theorems due to Hopf, Samelson, Leray and Borel. The main part of this paper is a thorough analysis of the relations between Hopf algebras and Lie groups (or algebraic groups). We emphasize especially the category of unipotent (and prounipotent) algebraic groups, in connection with Milnor-Moore's theorem. These methods are a powerful tool to show that some algebras are free polynomial rings. The last part is an introduction to the combinatorial aspects of polylogarithm functions and the corresponding multiple zeta values."
}

@article{Miyanishi,
author = {Masayoshi Miyanishi},
title = {{On the algebraic fundamental group of an algebraic group}},
volume = {12},
journal = {Journal of Mathematics of Kyoto University},
number = {2},
publisher = {Duke University Press},
pages = {351 -- 367},
year = {1972}
}

@book{springer2008linear,
  title={Linear Algebraic Groups},
  author={Springer, Tonny A.},
  lccn={2008938475},
  series={Modern Birkh{\"a}user Classics},
    edition={2nd},
  year={2008},
  publisher={Birkh{\"a}user Boston}
}

@misc{BCH,
  
  author = {Byszewski, Jakub and Cornelissen, Gunther and Houben, Marc},
  
  keywords = {Number Theory (math.NT), Algebraic Geometry (math.AG), Dynamical Systems (math.DS), FOS: Mathematics, FOS: Mathematics, Primary 14L10, 20G40, 37P55, 37C25, 37C35 Secondary 11B37, 11N45, 14F20, 30B40, 37C30},
  
  title = {Dynamics of endomorphisms of algebraic groups and related systems},
  
  publisher = {arXiv},
  
  year = {2022},
  
  copyright = {Creative Commons Attribution Non Commercial No Derivatives 4.0 International},

    eprint={2209.00085},
    archivePrefix={arXiv}
}

@book{srinivasan2006representations,
  title={Representations of Finite Chevalley Groups: A Survey},
  author={Srinivasan, Bhama},
  series={Lecture Notes in Mathematics},
  year={2006},
    volume={764},
  publisher={Springer Berlin Heidelberg}
}

@misc{hopfalgebra,
author = {Selig, Jonathan},
year = {2010},
month = {08},
pages = {},
title = {A Very Basic Introduction to Hopf Algebras},
url = {https://www.theoremoftheday.org/MathsStudyGroup/SeligHopf.pdf}
}

@article{Deligne-Lustzig,
 author = {Pierre Deligne and George Lusztig},
 journal = {Annals of Mathematics},
 number = {1},
 pages = {103--161},
 publisher = {Annals of Mathematics},
 title = {Representations of Reductive Groups Over Finite Fields},
 volume = {103},
 year = {1976}
}

@book{sga41/2,
  title={Cohomologie {\'e}tale: S{\'e}minaire de g{\'e}om{\'e}trie alg{\'e}brique du Bois-Marie SGA 4 1/2},
  author={Deligne, P.},
  series={Lecture notes in mathematics},
  url={https://books.google.nl/books?id=J3aTXwAACAAJ},
  year={1977},
  publisher={Springer}
}

@book{freudenthal1969linear,
  title={Linear Lie Groups},
  author={Freudenthal, Hans and Vries, Hendrik},
  series={Pure and applied mathematics},
  year={1969},
  publisher={Academic Press},
  volume={35}
}

@article{F4E6E7E8,
    author = {Berdjis, F. and Beslmüller, E.},
    title = "{Casimir operators for F4, E6, E7, and E8}",
    journal = {Journal of Mathematical Physics},
    volume = {22},
    number = {9},
    pages = {1857-1860},
    year = {1981},
    abstract = "{We construct explicitly algebraically‐independent generating sets for the Casimir operators and the invariants of the Weyl groups and adjoint groups of the exceptional Lie algebras F4, E6, E7, and E8.}",
}

@article{G2,
author = {Bincer, Adam and Riesselmann, Kurt},
year = {1993},
number = {12},
pages = {5935–5941},
title = {Casimir operators of the exceptional group G2},
volume = {34},
journal = {Journal of Mathematical Physics},
}
\end{document}